%
\documentclass{amsart}
\usepackage[latin1]{inputenc}
\usepackage{fancyhdr}
\usepackage{indentfirst}
\usepackage[dvips]{graphicx}
\usepackage{graphics}
\usepackage{newlfont}
\usepackage{amssymb}
\usepackage{amsmath}
\usepackage{amscd}
\usepackage{latexsym}
\usepackage{amsthm}
\usepackage{psfrag}
\usepackage[usenames]{color}
\usepackage{textcomp}
\usepackage[all]{xy}
\xymatrixrowsep={2pt}
\xymatrixcolsep={2pt}

\newcommand{\R}{\mathbb R}

\newcommand{\eps}{\varepsilon}

\newcommand{\m}{\preceq}
\newcommand{\Imm}{\mathrm{Im}}




\newenvironment{definition}[1][Definition]{\begin{trivlist}
\item[\hskip \labelsep {\bfseries #1}]}{\end{trivlist}}
\newenvironment{remark}[1][Remark]{\begin{trivlist}
\item[\hskip \labelsep {\bfseries #1}]}{\end{trivlist}}
\newenvironment{proposition}[1][Proposition]{\begin{trivlist}
\item[\hskip \labelsep {\bfseries #1}]}{\end{trivlist}}
\newenvironment{lemma}[1][Lemma]{\begin{trivlist}
\item[\hskip \labelsep {\bfseries #1}]}{\end{trivlist}}
\newenvironment{theorem}[1][Theorem]{\begin{trivlist}
\item[\hskip \labelsep {\bfseries #1}]}{\end{trivlist}}

\vbadness=10000 \hbadness=10000 \tolerance=10000

\begin{document}

\title[Estimating Multidimensional Persistent Homology]{Estimating Multidimensional Persistent Homology through a Finite Sampling}

\author{Niccol\`o Cavazza}
\address{
niccolo.cavazza@gmail.com}

\author{Massimo Ferri}
\address{
Dipartimento di Matematica\\
Universit\`a di Bologna\\
Italia\\
massimo.ferri@unibo.it}

\author{Claudia Landi}
\address{Dipartimento di Scienze e Metodi dell'Ingegneria\\
Universit\`a di Modena e Reggio Emilia\\
Italia\\
clandi@unimore.it}

\begin{abstract}
An exact computation of the persistent Betti numbers of a submanifold $X$ of a Euclidean space
is possible only in a theoretical setting. In practical situations, only a finite
sample of $X$ is available. We show that, under suitable density conditions,
it is possible to estimate the multidimensional persistent Betti numbers of $X$ from the ones of
a union of balls centered on the sample points; this even yields the exact
value in restricted areas of the domain.

Using these inequalities we improve a previous lower bound for the  natural pseudodistance to assess
dissimilarity between the shapes of two objects from a sampling of them.

Similar inequalities are proved for the multidimensional persistent Betti numbers of the ball union and the one
of a combinatorial description of it.

\keywords{Persistent Betti Numbers, Ball covering, Voronoi diagram, Blind strip}

\end{abstract}

\maketitle

\section{Introduction}\label{Intro}

Persistent Topology is an innovative way of matching topology and geometry, and
it proves to be an effective mathematical tool in Pattern Recognition~\cite{Gh08,Ca09}, particularly in
Shape Comparison~\cite{BiCeFrGiLa}. This new research
area is experiencing a period of intense theoretical progress, particularly in the
form of the multidimensional {\em persistent Betti numbers} (PBNs; also called {\em rank invariant} in~\cite{CaZo09}).
In order to express its full potential for applications,
it has to interface with the typical environment of Computer Science: It must be possible to deal
with a finite sampling of the object of interest, and with combinatorial representations of it.

A predecessor of the PBNs, the {\em size function} (i.e. PBNs at degree zero)
already enjoys such a connection, in that it is possible to estimate it from a finite, sufficiently dense
sampling~\cite{Fr92}, and it is possible to simplify the computation by processing a related graph~\cite{dA00}. Moreover,
strict inequalities hold only in ``blind strips'', i.e. in the $\omega$-neighborhood of the discontinuity lines,
where $\omega$ is the modulus of continuity of the filtering (also called {\it measuring}) function. Out of the
blind strips, the values of the size function of the original object, of a ball covering of it,
and of the related graph coincide. As a consequence it is possible to estimate
dissimilarity of the shape of two objects by using size functions of two
samplings.

The present paper extends this result to the PBNs of any degree, by using an article by
P. Niyogi, S. Smale, and S. Weinberger~\cite{NiSmWe08}, for a ball covering of the object $X$ --- a submanifold of
a Euclidean space --- with balls centered at dense enough points of $X$ or near $X$: Theorems \ref{wrap} and \ref{wrap2}.
A combinatorial representation of $X$, with the corresponding inequalities (Theorem \ref{t5}) is
based on a construction by H. Edelsbrunner~\cite{Ed95}. In Section \ref{comparison} we use Theorem \ref{wrap} to get a lower bound
for the natural pseudodistance between two objects of which only finite samplings are given.

All results are provided for multidimensional filtering functions, although most current applications
just use monodimensional ones.

It should be noted that the same kind of problem has been addressed in~\cite{ChaLieu05} by using the notion of Weak Feature Size.
An inequality rather similar to ours of Theorem \ref{wrap}, also extending the main result of~\cite{Fr92}, is exposed in the
proof of the Homology Inference Theorem in Section 4 of~\cite{CoEdHa07}. The progress represented by the present paper
consists in the stress on inequalities (and not just on the consequent equalities), in the use of multidimensional
filtering functions, in the fact that any continuous function --- not just distance --- is considered, and
in the estimate of the natural pseudodistance.

Some simple examples illustrate the results.

\section{Preliminary results}\label{BasicResult}

In this section, we report some results on the stability of the
PBNs and some topological properties of compact
Riemannian submanifolds of $\mathbb{R}^m$. For basic notions on homology
and persistent homology we refer the reader to~\cite{EiSt} and~\cite{EdHa08},
for classical properties of submanifolds to~\cite{Hi76}.

\subsection{Multidimensional persistent Betti numbers}\label{stab}

In this paper we will always work with coefficients in a field
$\mathbb{K}$, so that all homology modules are vector spaces.
First we define the following relation $\prec$ (resp.
$\preceq$) in $\mathbb{R}^n$: if $u=(u_1,\ldots,u_n)$ and $v=(v_1,\ldots,v_n)$, we write $ u \prec  v$ (resp. $ u
\preceq  v$) if and only if $u_j<v_j$ (resp. $u_j \leq
v_j$) for $j=1,\ldots,n$. We also define $\Delta^+$ as the open set
$\{( u, v)\in\mathbb{R}^n\times\mathbb{R}^n \,|\, u\prec v\}$.\\
As usual, a topological space $X$ is $triangulable$ if there is a
finite simplicial complex whose underlying space is homeomorphic
to $X$; for a submanifold of a Euclidean space, it will mean that
its triangulation can be extended to a domain containing it. We
use \v Cech homology because it guarantees some useful continuity
properties~\cite{CeDiFeFrLa13}. For terms and concepts concerning this homology,
we refer to~\cite{EiSt}.

\begin{definition}[Persistent Betti numbers]\label{rango}
Let $X$ be a triangulable space and $ f=(f_1, \ldots, f_n):X\rightarrow
\mathbb{R}^{n}$ be a continuous function. $ f$ is called a
{\em filtering function}. We denote by $X\langle
 f \m  u \rangle$ the lower level subset $\{p\in X \,|\, f_j(p)\leq
u_j, j=1,\ldots, n\}$. Then, for each $i\in \mathbb{Z}$, the {\em $i$-th
multidimensional persistent Betti numbers (briefly PBNs) function of $(X, f)$} is $\beta_{(X,
f,i)}:\Delta^{+}\rightarrow \mathbb{N}$ defined as
$\beta_{(X,f,i)}( u, v)=\dim(\Imm \iota)$, with

\[\iota:\check{H}_{i}(X\langle  f \m
 u \rangle)\to \check{H}_{i}(X\langle  f \m
 v \rangle),\]

\noindent the homomorphism induced by the inclusion map of the sublevel set
\hbox{$X\langle  f \m  u \rangle \subseteq  X\langle  f \m  v \rangle$}.
Here $\check{H}_i$ denotes the $i$-th \v Cech homology module.
\end{definition}

\begin{remark}
When PBNs were considered in~\cite{CoEdHa07} the filtering functions
were taken as to be tame, but in our case, since we need continuous
maps, we refer to Section 2.2 of~\cite{CeDiFeFrLa13} for the relevant extension.
\end{remark}

PBNs give us a way to analyze triangulable spaces
through their homological properties. Then it is natural to
introduce a distance for comparing them. This has been done, and
through this distance it has been possible to prove stability of the PBNs
under variations of the filtering function in the one-dimensional~\cite{CoEdHa07}
and multidimensional case~\cite{CeDiFeFrLa13}.

\subsection{Topological properties of compact Riemannian submanifolds of $\mathbb R^m$}

As hinted in Section \ref{Intro}, we are interested in getting information
on a submanifold of $\mathbb R^m$ via a finite sampling of it and a related
ball covering. To this goal, we now state some properties of compact Riemannian submanifolds of
$\mathbb R^m$, especially referred to such an approximating covering. Definition
\ref{condition} and Proposition \ref{smale} are due to
P. Niyogi, S. Smale, and S. Weinberger~\cite{NiSmWe08}. The main
idea is that, under suitable hypotheses, it is possible to
get, from a sampling of a submanifold, a ball covering whose union retracts on it.\\
First, notice that the spaces $\mathbb{R}^m$ and $\mathbb{R}^n$ play two different r\^oles in our arguments:
the ambient space of our submanifolds (which will always be $\mathbb{R}^m$) is endowed with the
classical Euclidean norm and has no partial order relation on it.
On the other hand, the codomain of the filtering functions ($\mathbb{R}^n$ throughout)
is endowed with the max norm and with the partial order relation $\preceq$, as
defined at the beginning of Section \ref{stab}.

\begin{definition}[Modulus of continuity]\label{modulo}
Let $ f:\mathbb R^m\to \mathbb R^n$ be a
continuous function. Then, for $\eps\in \mathbb R^+$, the
{\em modulus of continuity} $\Omega(\eps)$ of $ f$ is:

\[\Omega(\eps)=\max_{j=1,\ldots, n}\sup
\Big\{\mathrm{abs}(f_j( p)-f_j( p'))\ |\   p,
p'\in \mathbb R^m,\  \| p- p'\|\leq
\eps \Big\}.\]

In other words $\Omega(\eps)$ is the maximum over all moduli of
continuity of the single components of $ f$.
\end{definition}

For a given compact Riemannian submanifold $X$ of $\mathbb R^m$, the normal space of $X$ at
a point $p \in X$ is the vector subspace $N_pX$ of the tangent space of $\mathbb R^m$ at $p$, $T_p\mathbb R^m$, formed by the vectors
orthogonal to $T_pX$, the tangent space of $X$ at $p$. The {\em open normal bundle to $X$ of radius $s$} is defined as the subset
of $T\mathbb R^m$
\[ \{(p, v) \in T\mathbb R^m \,\vert\, p\in X, \ v\in N_pX, \ ||v|| < s\} \]
By Theorem 10.19 of~\cite{Lee} and by compactness, there exists an embedding of the open normal bundle to $X$ of radius
$s$ into $\mathbb R^m$ for some $s$. Its image $Tub_sX$ is called a {\em tubular neighborhood} of $X$.


A {\it condition number} $\frac{1}{\tau}$ is associated with any
compact Riemannian submanifold $X$ of $\mathbb R^m$.

\begin{definition}\label{condition}
$\tau$ is the largest number such that every open normal bundle
$B$ about $X$ of radius $s$ is embedded in $\mathbb{R}^m$ for
$s<\tau$.

\end{definition}

\begin{proposition} (Prop. 3.1 of~\cite{NiSmWe08})\label{smale}
Let $X$ be a compact Riemannian submanifold of $\mathbb R^m$. Let
$L=\{l_1,\ldots,l_k\}$ be a collection of points of $X$, and let
$U=\bigcup_{j=1,\ldots, k}B(l_j,\delta)$ be the union
of balls of $\mathbb R^m$ with center at the points of $L$ and
radius $\delta$. Now, if $L$ is such that for every point $p\in X$
there exists an $l_j\in L$ such that $\|p-l_j\| <
\dfrac{\delta}{2}$, then, for every $\delta <
\sqrt{\dfrac{3}{5}}\tau$, $X$ is a deformation retract of
$U$. So they have the same homology.
\end{proposition}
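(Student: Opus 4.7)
The plan is to construct an explicit deformation retraction $U \to X$ via the nearest-point projection onto $X$, with the entire difficulty concentrated in a geometric estimate controlled by the condition number $\tau$. The first step is to set up the projection: since $\delta < \sqrt{3/5}\,\tau < \tau$, every point of $U$ lies within distance $\delta < \tau$ of its ball center, hence within $\tau$ of $X$, so $U \subseteq Tub_\tau X$. By Definition \ref{condition} this tubular neighborhood carries a well-defined continuous nearest-point projection $\pi \colon Tub_\tau X \to X$ satisfying $\pi|_X = \mathrm{id}_X$, and restricting to $U$ gives a retraction (note $X \subseteq U$, since each $l_j \in L \subseteq X$ is itself a center). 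I would then propose the straight-line homotopy
\[ H \colon U \times [0,1] \to \mathbb{R}^m, \qquad H(x,t) = (1-t)\,x + t\,\pi(x), \]
which satisfies $H(\cdot,0) = \mathrm{id}_U$, $H(\cdot,1) = \pi$, and $H(p,t) = p$ for every $p \in X$; so it exhibits $X$ as a deformation retract of $U$ provided $H(U \times [0,1]) \subseteq U$.

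The hard part, and where the specific constant $\sqrt{3/5}$ enters, is verifying exactly this containment. For $x \in B(l_j,\delta)$ set $p = \pi(x)$; since $l_j \in X$ and $p$ is the closest point of $X$ to $x$, one has $\|x - p\| \le \|x - l_j\| < \delta$. Using the density hypothesis, pick $l_i \in L$ with $\|p - l_i\| < \delta/2$. The goal is to cover the segment $\{y_t = (1-t)\,x + t\,p : t \in [0,1]\}$ by $B(l_j,\delta) \cup B(l_i,\delta)$. The key geometric input is that $x - p$ is normal to $X$ at $p$, while for any $l \in X$ near $p$ the chord $l - p$ makes only a small angle with $T_pX$; specifically, the reach bound gives $\sin \angle(l - p,\, T_pX) \le \|l - p\|/(2\tau)$. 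Decomposing both $y_t - l_j$ and $y_t - l_i$ along $T_pX$ and $N_pX$ turns $\|y_t - l_j\|^2$ and $\|y_t - l_i\|^2$ into explicit quadratic functions of $t$, and a direct optimization shows that at every $t \in [0,1]$ at least one of the two is strictly less than $\delta^2$ — and that this remains achievable precisely when $\delta^2 < \tfrac{3}{5}\,\tau^2$.

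Once the segment argument is settled, continuity of $H$ is inherited from that of $\pi$ on the tubular neighborhood, and we conclude that $X$ is a deformation retract of $U$; in particular they share homotopy type, hence isomorphic (\v Cech and singular) homology. The main obstacle throughout is the geometric estimate above: extracting the sharp constant $\sqrt{3/5}$ requires a careful trigonometric optimization based on the reach, which is essentially the content of the original argument in~\cite{NiSmWe08}.
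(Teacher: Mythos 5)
Your reconstruction reproduces exactly the construction the paper itself records in Remark~\ref{rem} immediately after this proposition: the retraction is the nearest-point projection $\pi$ of the tubular neighborhood restricted to $U$, and the deformation is the straight-line homotopy $F(q,t)=(1-t)q+t\pi(q)$. The paper does not re-prove the statement — it is quoted from Niyogi--Smale--Weinberger — and you likewise identify the genuine difficulty (keeping the straight-line trajectories inside $U$, whence the constant $\sqrt{3/5}$) and correctly defer that reach-based optimization to the same source.
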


\begin{remark}\label{rem}
The proof of Proposition \ref{smale} gives us a way to construct a retraction
$\pi:U\to X$ and a homotopy $F:U\times I \to U$
such that $F(q,0)=q$ and $F(q,1)=\pi(q)$.\\
Let $\pi_0:Tub_{\tau}\to X$ be the canonical
projection from the tubular neighborhood of radius $\tau$ of $X$
onto $X$. Then $\pi$ is the restriction of $\pi_0$ to
$U$ for which it holds:
\[\pi(q)=\arg \min_{p \in X}\|q-p\|.
\]

Then the homotopy is given by
\[F(q,t)=(1-t)q+t\pi(q).\]

It is also important to observe that the retraction $\pi$ moves
the points of $U$ less than $\delta$; this is because the
trajectory of $\pi(q)$ always remains inside a ball of
$U$ that contains $q$ ($q$ can be contained in the
intersection of different balls), for every $q\in U$.
In fact $\pi^{-1}(q)=U \cap Tan_q^{\bot} \cap B_{\tau}(q)$
(for a complete argument we refer to Section 4 of~\cite{NiSmWe08}).
\end{remark}

\section{Retracts}

Aim of this Section is to yield two rather general results, which
will be specialized to Theorem \ref{wrap} and Lemma \ref{t4}.
Throughout this Section, $Y$ will be a compact Riemannian (hence triangulable)
submanifold of $\mathbb R^m$ and $V$ will be a compact, triangulable subspace of
$\mathbb R^m$ such that $Y$ is a deformation retract of $V$, with
retraction $r$ and homotopy $G:V\times I \to V$ from the identity of $V$, $1_V$, to $r$.
Moreover $\forall y \in Y$, $\forall v \in
r^{-1}(y)$, $\forall t\in I$ we assume that $(r\circ G)(v,t)=y$.\\
Let also $ f:\mathbb R^m \to \mathbb R^n$ be a
continuous function, and $ f_Y$ and $ f_V$ be the
restrictions of $ f$ to $Y$ and $V$ respectively.

\begin{lemma}\label{lemmas1}
$Y \langle  f_Y \m  x \rangle$ is a deformation retract of
$V \langle  f_Y \circ r \m  x \rangle$.
\end{lemma}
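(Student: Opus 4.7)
The plan is to identify $V\langle f_Y\circ r \preceq x\rangle$ with the preimage under $r$ of $Y\langle f_Y\preceq x\rangle$, and then to obtain the deformation retraction by restricting the given retraction $r$ and homotopy $G$ to this preimage. Throughout I will write $A := Y\langle f_Y\preceq x\rangle$ and $B := V\langle f_Y\circ r\preceq x\rangle$.

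First I would unpack the definitions: a point $v\in V$ lies in $B$ if and only if $f_Y(r(v))\preceq x$, i.e.\ $r(v)\in A$. Hence $B = r^{-1}(A)$. Since $r$ is a retraction of $V$ onto $Y$, every $a\in A$ satisfies $r(a)=a\in A$, so $A\subseteq B$. With $B=r^{-1}(A)$, the restriction $\tilde r := r|_B\colon B\to A$ is a well-defined continuous map, and it fixes $A$ pointwise because $r$ does. This already produces the candidate retraction.

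The only real content of the lemma lies in showing that the homotopy $G$ restricts to a homotopy $\tilde G\colon B\times I\to B$. This is where the hypothesis $(r\circ G)(v,t)=y$ for $v\in r^{-1}(y)$ is crucial: applied with $y=r(v)$, it says $r(G(v,t))=r(v)$ for every $v\in V$ and $t\in I$. Therefore, if $v\in B$, we have $r(G(v,t))=r(v)\in A$, which means $G(v,t)\in r^{-1}(A)=B$. So $\tilde G$ lands in $B$, satisfies $\tilde G(v,0)=v$ and $\tilde G(v,1)=r(v)=\tilde r(v)$, and is a homotopy between $1_B$ and the inclusion-composed retraction $i\circ\tilde r$.

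I expect the key (and essentially only) obstacle to be recognizing the role of the assumption $(r\circ G)(v,t)=r(v)$: without it, nothing prevents $G$ from moving a point of $B$ to a point of $V$ whose image under $r$ lies outside $A$. Once this invariance is invoked, the verifications are immediate, so the argument reduces to the identification $B=r^{-1}(A)$ and the restriction of $(r,G)$ to $B$.
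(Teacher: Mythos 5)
Your proof is correct and follows essentially the same route as the paper's: restrict $r$ and $G$ to $V\langle f_Y\circ r\preceq x\rangle$ and observe that the extra hypothesis on $(r\circ G)$ guarantees the restricted homotopy stays inside this sublevel set. You make the key step explicit (deriving $r(G(v,t))=r(v)$ and hence $G(v,t)\in r^{-1}(A)$), whereas the paper compresses it into the phrase ``thanks to the assumptions on $G$ and $r$''; your identification $B=r^{-1}(A)$ is a clean way to phrase what the paper does implicitly.
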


\begin{proof}
Let $r_{ x}:V \langle  f_Y \circ r \m  x
\rangle\to Y \langle  f_Y \m  x \rangle$ be
the restriction of $r$ to $V \langle  f_Y \circ r \m  x
\rangle$. It is well-defined since, by the definition of the two sets, $r_x(V
\langle  f_Y \circ r \m  x \rangle)\subseteq Y \langle
 f_Y \m  x \rangle$. We now set $G_x:V \langle
f_Y \circ r \m  x \rangle\times I\to V \langle
 f_Y \circ r \m  x \rangle$ as the restriction of $G$ to
$V \langle  f_Y \circ r \m  x \rangle\times I$. This
restriction is well-defined, because the path from $v$ to $r_x(v)$ is all contained in $V \langle  f_Y \circ r \m  x
\rangle$, thanks to the assumptions on $G$ and $r$. Moreover, it is
continuous and for every $v \in V \langle  f_Y \circ r \m  x \rangle$,
$G_x(v,0)=v$ and $G_x(v,1)=r_x(v)$. So it is the searched
for deformation retraction.
\end{proof}

\begin{remark}
Since the homotopy $G$ is relative to $Y$ (i.e. keeps the points
of $Y$ fixed throughout), this is what is called a {\it strong}
deformation retract in~\cite{EiSt}.
\end{remark}

Now let $\eps=\max_{v\in V}\|r(v)-v\|$ and
$ \omega(\eps)=(\Omega(\eps),\ldots,\Omega(\eps))
\in \mathbb{R}^n$, where $\Omega$ is the modulus of continuity of
$ f$ (Def. \ref{modulo}). For sake of simplicity, set now
$\beta_V = \beta_{(V, f_V,i)}$, $\beta_Y = \beta_{(Y, f_Y,i)}$,  $V_x=V\langle{f_V\m x}\rangle$, and
$Y_x=Y\langle{f_Y\m x}\rangle$.

\begin{lemma}\label{lemmas3}
If $( u,  v)$ is a point of $\Delta^+$ and if $ u
+ \omega(\eps) \prec  v - \omega(\eps)$, then

\[\beta_V( u- \omega(\eps),  v+
\omega(\eps))\leq \beta_Y( u, v)\leq
\beta_V( u+ \omega(\eps), v-
\omega(\eps))\]
\end{lemma}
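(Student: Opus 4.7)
The plan is to sandwich the sublevel sets of $V$ between sublevel sets of the auxiliary space appearing in Lemma \ref{lemmas1}, and then use the fact that rank of a composition is bounded above by the rank of any factor.

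First I would introduce the shorthand $W_x := V\langle f_Y \circ r \preceq x\rangle$, so that Lemma \ref{lemmas1} reads: $Y_x$ is a deformation retract of $W_x$. In particular the inclusion $Y_x \hookrightarrow W_x$ induces an isomorphism in \v Cech homology, and because both horizontal maps in the square
\[
\begin{array}{ccc}
W_u & \hookrightarrow & W_v \\
\uparrow & & \uparrow \\
Y_u & \hookrightarrow & Y_v
\end{array}
\]
are inclusions, the square commutes and the induced map $\check H_i(W_u) \to \check H_i(W_v)$ has rank exactly $\beta_Y(u,v)$.

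Next, the key geometric observation: since $\|v-r(v)\| \leq \eps$ for every $v\in V$, the definition of $\Omega$ gives $|f_j(v) - f_j(r(v))| \leq \Omega(\eps)$ for each component $j$, i.e.\ $f_V(v) - \omega(\eps) \preceq f_Y(r(v)) \preceq f_V(v) + \omega(\eps)$. Translating this into sublevel sets yields the chain of inclusions
\[V_{x-\omega(\eps)} \;\subseteq\; W_x \;\subseteq\; V_{x+\omega(\eps)}\]
for every $x\in \mathbb R^n$.

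For the upper bound $\beta_Y(u,v) \leq \beta_V(u+\omega(\eps),v-\omega(\eps))$, I would then concatenate
\[W_u \;\subseteq\; V_{u+\omega(\eps)} \;\subseteq\; V_{v-\omega(\eps)} \;\subseteq\; W_v,\]
which is legitimate because the hypothesis $u+\omega(\eps)\prec v-\omega(\eps)$ makes the middle inclusion valid. Applying $\check H_i$, the composition equals the inclusion-induced map $\check H_i(W_u)\to \check H_i(W_v)$, of rank $\beta_Y(u,v)$, while the middle factor $\check H_i(V_{u+\omega(\eps)})\to \check H_i(V_{v-\omega(\eps)})$ has rank $\beta_V(u+\omega(\eps),v-\omega(\eps))$. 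Since the rank of a composition is bounded by the rank of any intermediate map, the inequality follows. The lower bound is symmetric: apply the same argument to
\[V_{u-\omega(\eps)} \;\subseteq\; W_u \;\subseteq\; W_v \;\subseteq\; V_{v+\omega(\eps)},\]
where now the composition has rank $\beta_V(u-\omega(\eps),v+\omega(\eps))$ and the middle factor has rank $\beta_Y(u,v)$.

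The main thing to be careful about is not the algebra — which is essentially automatic once the inclusions are in place — but the commutativity justifying that the map $\check H_i(W_u)\to \check H_i(W_v)$ agrees with $\check H_i(Y_u)\to \check H_i(Y_v)$ under the deformation-retract isomorphisms. This relies on the hypothesis that $G$ restricts to a deformation retraction on each sublevel set (already exploited in Lemma \ref{lemmas1}) and on the naturality of induced maps with respect to the inclusions $Y_x\hookrightarrow W_x$; no further delicacy of \v Cech homology is needed since all spaces involved are triangulable.
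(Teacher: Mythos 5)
Your proof is correct, and it takes a noticeably cleaner route than the paper's. The paper builds a six-node trapezoid diagram involving the maps $\bar r = r_u \circ i_u$ and $\hat r = r_v \circ i_v$ (which go from $V_{u-\omega}$, $V_{v-\omega}$ down to $Y_u$, $Y_v$), proves the small trapezoid commutes strictly, and then has to verify that the large trapezoid commutes \emph{up to homotopy} via an explicit homotopy $\bar G = G_u \circ (i_u \times 1_I)$, since $\alpha$ (inclusion) and $\theta \circ \bar r$ (a retraction followed by an inclusion) are different maps on $V_{u-\omega}$. You instead keep the auxiliary sublevel sets $W_x = V\langle f_Y \circ r \preceq x\rangle$ visible in the diagram, record the sandwich $V_{x-\omega(\eps)} \subseteq W_x \subseteq V_{x+\omega(\eps)}$ coming from the modulus-of-continuity bound, and observe that every map in sight is then a set-theoretic inclusion. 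Commutativity of all the relevant squares and triangles is therefore automatic; the only non-trivial input is that $Y_x \hookrightarrow W_x$ is a homology isomorphism, which is exactly Lemma \ref{lemmas1}. What your version buys is the elimination of the separate homotopy-commutativity argument (the paper essentially re-does a piece of the work of Lemma \ref{lemmas1} inside the proof of Lemma \ref{lemmas3}); the paper's version does not seem to buy anything in return beyond being more verbose. Both use the same core idea --- interleave the $V$-sublevels with spaces to which Lemma \ref{lemmas1} applies, then bound the rank of a composition by the rank of an interior factor --- so I would call this a streamlined implementation rather than a fundamentally different proof, but the simplification is real and worth having.
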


\begin{proof}

For $\omega=\omega(\eps)$ such that $u
+\omega \prec v - \omega$ we can  consider the following diagram
\begin{eqnarray}\label{diagram}
\xymatrix {
V_{u-\omega}\ar[rrrr]^{\alpha} \ar[dddrr]_{\bar r }&  & & &V_{u+\omega}\ar[rr]^{ \gamma} & &V_{v-\omega} \ar[rrrr]^{\delta}\ar[dddrr]_{\hat r} & & & &V_{v+\omega}\\
\\ \\
& & Y_u\ar[uuurr]_{\theta }\ar[rrrrrr]^{\eta}& &  & & & & Y_v\ar[uuurr]_{\iota}& & &
}
\end{eqnarray}
where the maps $\alpha$, $\gamma$, $\delta$, $\eta$, $\theta$, $\iota$ are inclusions,
$\bar{r}$ is the composition of $r_{ u}$, where $r_{u}$ is as in the proof of Lemma
\ref{lemmas1} with ${x}={ u}$, with the inclusion  $i_u$ of $V_{u -  \omega}$ into
$V \langle f_Y \circ r \m  u \rangle$, and $\hat{r}$ is similarly defined as the
composition of $r_{ v}$ with the inclusion $i_v$ of $V_{v -  \omega}$ into
$V \langle f_Y \circ r \m  v \rangle$. Let us observe that, passing to homology and
using the symbol $^*$ to denote the induced homomorphisms, the dimension of the image
of $\delta_*\circ\gamma_*\circ\alpha_*$ is precisely $\beta_{V}( u- \omega,  v+ \omega)$,
that of $\eta_*$ is  $\beta_{Y}(u,v)$, and that of $\gamma_*$ is $\beta_{V}( u+ \omega,  v- \omega)$.

The first claimed inequality will follow from the commutativity of the large trapezoid
in diagram (\ref{diagram}) up to homotopy.  Indeed, if $\delta\circ \gamma\circ\alpha$ is
homotopic to $\iota\circ \eta\circ \bar r$, then, passing to homology, the map
$\delta_*\circ\gamma_*\circ\alpha_*$ is equal to the map $\iota_*\circ \eta_*\circ \bar r_*$.
As a consequence the dimension of the image of $\delta_*\circ\gamma_*\circ\alpha_*$  is not
grater than that of $\eta_*$.
Moreover, the second claimed inequality will follow from the commutativity of the small
trapezoid because, in this case, passing to homology we have $\eta_*= \hat r_*\circ \gamma_*\circ \theta_*$.
Therefore, the dimension of the image of $\eta_*$ is not greater than that of $\gamma_*$.

We begin proving that the small trapezoid commutes exactly. We observe that $\hat{r}$
is the identity map on the points of $Y$. Since
\hbox{$Y_u  \subseteq Y$}, we have that
$\hat{r}\circ\gamma\circ\theta$ is the canonical inclusion of $Y_u $ in $Y_v$.

To prove the commutativity of the large trapezoid up to homotopy, using the exact commutativity
of the small trapezoid, it is sufficient to prove that the two triangles in diagram (\ref{diagram})
commute up to homotopy. As for the left triangle, consider $\bar{G}:V_{u - \omega}\times I \to V_{u +  \omega}$
be the composition $\bar{G} =G_{u} \circ (i_u\times 1_I)$,
where $G_{\vec u}$ is as in the proof of Lemma \ref{lemmas1}.
Now, for every $v \in V_{ u -\omega}$, we have $\bar{G}(v,0)=G(v,0)=v=\alpha(v)$
and $\bar{G}(v,1)=G(v,1)=r(v)=\bar{r}(v)=\theta\circ \bar{r}(v)$. Hence $\alpha$ is homotopic to
$\theta\circ \bar r$. The proof that $\delta$ is homotopic to $ \hat r\circ \iota$ is analogous.

\end{proof}

\begin{remark}
The diagram of the previous proof, setting $v=u+\omega$, shows that the persistence
modules of $Y$ and $V$ are $\omega$-interleaved~\cite{Les15} (strongly $\omega$-interleaved, in the terminology
of~\cite{CaCo*09}), so that their interleaving distance is $\ge \omega$.
\end{remark}

\begin{lemma}\label{carab}
If $( u,  v)$ is a point of $\Delta^+$ such that $ u
+ \omega(\eps) \prec  v - \omega(\eps)$ and
\[\beta_V( u- \omega(\eps),  v+
\omega(\eps)) =
\beta_V( u+ \omega(\eps), v-
\omega(\eps))\]
then
\[\beta_Y( u, v) = \beta_V( u, v)\]
\end{lemma}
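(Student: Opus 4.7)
The plan is to reduce the statement to a classical squeeze argument, using Lemma \ref{lemmas3} as the main ingredient and supplementing it with a simple monotonicity lemma for the persistent Betti numbers of $V$ itself.

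First, I recall the double inequality established in Lemma \ref{lemmas3}:
\[
\beta_V(u-\omega(\eps),\,v+\omega(\eps))\;\leq\;\beta_Y(u,v)\;\leq\;\beta_V(u+\omega(\eps),\,v-\omega(\eps)).
\]
Under the hypothesis that the two outer terms coincide, this already pins down $\beta_Y(u,v)$. It remains to prove that $\beta_V(u,v)$ is sandwiched between the same two quantities, so that it is pinned to the identical value.

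The second step is therefore a monotonicity argument internal to $V$. Since $\omega(\eps)$ has nonnegative entries and $u+\omega(\eps)\prec v-\omega(\eps)$, we have the chain of inclusions
\[
V_{u-\omega(\eps)}\subseteq V_u\subseteq V_{u+\omega(\eps)}\subseteq V_{v-\omega(\eps)}\subseteq V_v\subseteq V_{v+\omega(\eps)}.
\]
Passing to $i$-th \v Cech homology yields a commutative diagram of $\mathbb{K}$-linear maps. A standard fact about compositions of linear maps $A\to B\to C\to D$ is that the dimension of the image of the composite $A\to D$ is bounded above by the dimension of the image of the middle map $B\to C$; applying this twice gives
\[
\beta_V(u-\omega(\eps),\,v+\omega(\eps))\;\leq\;\beta_V(u,v)\;\leq\;\beta_V(u+\omega(\eps),\,v-\omega(\eps)).
\]

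Finally, the hypothesis that the leftmost and rightmost quantities are equal collapses both sandwiches to a single common value, which is simultaneously $\beta_Y(u,v)$ and $\beta_V(u,v)$; the desired equality follows at once. There is no real obstacle here beyond Lemma \ref{lemmas3} itself: the only points to verify are that the chain of sublevel sets is well-defined (which is immediate from $\omega(\eps)\succeq 0$ and $u+\omega(\eps)\prec v-\omega(\eps)$) and the routine linear-algebra fact about ranks of composites, so the proof should be concise.
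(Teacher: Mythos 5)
Your proof is correct and follows exactly the route the paper takes: Lemma \ref{lemmas3} sandwiches $\beta_Y(u,v)$, and the monotonicity of $\beta_V$ (non-decreasing in the first argument, non-increasing in the second) sandwiches $\beta_V(u,v)$ between the same two equal quantities. The paper simply cites this monotonicity as known, whereas you supply the short rank-of-composites justification, but the argument is the same.
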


\begin{proof}
Straightforward from Lemma \ref{lemmas3} and from the fact that
$\beta_V$ is non-decreasing in the first variable and non-increasing in the second one.
\end{proof}

The next Lemma shows that there may be whole regions (hyperparallelepipeds) where the PBN's of $Y$ and $V$ are
known to agree.

\begin{lemma}\label{carab2}
Let $u, \overline{u}, u', v, \overline{v}, v' \in \mathbb{R}^n $ be such that
$ u\preceq \overline{u}- \omega(\eps), \ \overline{u} + \omega(\eps) \preceq u' \prec v \preceq \overline{v} - \omega(\eps),
\ \overline{v} + \omega(\eps) \preceq v'$. If
\[\beta_V( u, v') = \beta_V(u', v )\]
then
\[\beta_Y( \overline{u}, \overline{v}) = \beta_V( \overline{u}, \overline{v})\]
\end{lemma}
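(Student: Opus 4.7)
The plan is to apply Lemma \ref{lemmas3} at the point $(\overline{u}, \overline{v})$ and then use monotonicity of $\beta_V$ (non-decreasing in the first variable, non-increasing in the second) to sandwich everything between $\beta_V(u, v')$ and $\beta_V(u', v)$, which by hypothesis agree.

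First I would check that Lemma \ref{lemmas3} is applicable at $(\overline{u}, \overline{v})$: this requires $\overline{u} + \omega(\eps) \prec \overline{v} - \omega(\eps)$, which follows immediately from the chain $\overline{u} + \omega(\eps) \preceq u' \prec v \preceq \overline{v} - \omega(\eps)$ assumed in the statement. Lemma \ref{lemmas3} then gives
\[
\beta_V(\overline{u} - \omega(\eps), \overline{v} + \omega(\eps)) \;\le\; \beta_Y(\overline{u}, \overline{v}) \;\le\; \beta_V(\overline{u} + \omega(\eps), \overline{v} - \omega(\eps)).
\]

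Next I would invoke monotonicity of $\beta_V$. Since $u \preceq \overline{u} - \omega(\eps)$ and $\overline{v} + \omega(\eps) \preceq v'$, we get
\[
\beta_V(u, v') \;\le\; \beta_V(\overline{u} - \omega(\eps), \overline{v} + \omega(\eps)),
\]
while $\overline{u} + \omega(\eps) \preceq u'$ and $v \preceq \overline{v} - \omega(\eps)$ give
\[
\beta_V(\overline{u} + \omega(\eps), \overline{v} - \omega(\eps)) \;\le\; \beta_V(u', v).
\]
Combining these with the inequalities from Lemma \ref{lemmas3} yields
\[
\beta_V(u, v') \;\le\; \beta_Y(\overline{u}, \overline{v}) \;\le\; \beta_V(u', v).
\]
The same monotonicity argument applied directly, using $u \preceq \overline{u}$ and $\overline{v} \preceq v'$ in one direction and $\overline{u} \preceq u'$ and $v \preceq \overline{v}$ is not quite what we need; rather, from $u \preceq \overline{u}$ and $\overline{v} \preceq v'$ we obtain $\beta_V(u, v') \le \beta_V(\overline{u}, \overline{v})$, and from $\overline{u} \preceq u'$ and $v \preceq \overline{v}$ we obtain $\beta_V(\overline{u}, \overline{v}) \le \beta_V(u', v)$, so likewise
\[
\beta_V(u, v') \;\le\; \beta_V(\overline{u}, \overline{v}) \;\le\; \beta_V(u', v).
\]

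Finally, the hypothesis $\beta_V(u, v') = \beta_V(u', v)$ forces both chains of inequalities to collapse to equalities, so that $\beta_Y(\overline{u}, \overline{v}) = \beta_V(u, v') = \beta_V(\overline{u}, \overline{v})$, which is the claim. There is no real obstacle here: the lemma is essentially a packaging of Lemma \ref{lemmas3} with the standard monotonicity of PBNs, and the only mildly delicate point is bookkeeping the order conditions to make sure each monotonicity step is valid.
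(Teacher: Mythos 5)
Your proof is correct and follows essentially the same route as the paper: sandwich the relevant quantities between $\beta_V(u,v')$ and $\beta_V(u',v)$ via monotonicity, then collapse the chain using the hypothesis. The only cosmetic difference is that the paper stops after deducing $\beta_V(\overline{u}-\omega,\overline{v}+\omega)=\beta_V(\overline{u}+\omega,\overline{v}-\omega)$ and hands off to Lemma~\ref{carab}, whereas you unfold Lemma~\ref{carab} and invoke Lemma~\ref{lemmas3} directly; since Lemma~\ref{carab} is itself just Lemma~\ref{lemmas3} plus monotonicity, this is the same argument.
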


\begin{proof}
Since $\beta_V$ is non-decreasing in the first variable and non-increasing in the second one, we have, with $\omega = \omega(\eps)$,
\[\beta_V(u, v') \le \beta_V(u, \overline{v}+\omega) \le \beta_V(\overline{u}-\omega, \overline{v}+\omega)
\le\]
\[\le \beta_V(\overline{u}+\omega, \overline{v}-\omega) \le \beta_V(\overline{u}+\omega, v) \le \beta_V(u', v) \]
whence, by the hypothesis $\beta_V(u, v') = \beta_V(u', v)$, we get $\beta_V(\overline{u}-\omega, \overline{v}+\omega)
= \beta_V(\overline{u}+\omega, \overline{v}-\omega)$. Lemma \ref{carab} then yields the thesis.
\end{proof}

As a consequence of the preceding lemmas, the regions of $\Delta^+$ where $\beta_V$ and $\beta_Y$ may possibly disagree
can be precisely localized in a neighborhood of the discontinuity set of $\beta_V$ (viewed as an integer function). In the case
$n=1$ where the discontinuity sets are proved to be (possibly infinite) line segments~\cite{FrLa01}, these regions are called
{\em blind strips}. We stress that the position of the blind stips is well-known, since it is determined by the
position of the discontinuity lines of the PBNs of $V$.

\begin{lemma}\label{blind}
If $\overline{u}, \overline{v} \in \mathbb{R}^n$ are such that
\[\beta_Y( \overline{u}, \overline{v}) \neq \beta_V( \overline{u}, \overline{v})\]
then there is at least a point $(\tilde{u}, \tilde{v})$ with max norm
$\| (\overline{u}, \overline{v}) - (\tilde{u}, \tilde{v}) \| \le \Omega(\eps)$, which is either an element of the boundary of $\Delta^+$ or a discontinuity
point of $\beta_V$.
\end{lemma}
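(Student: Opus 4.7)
My plan is to prove the lemma by contraposition using Lemma \ref{carab} as the engine. I will assume that no point within max-norm distance $\Omega(\eps)$ of $(\overline{u}, \overline{v})$ lies on $\partial \Delta^+$ or is a discontinuity of $\beta_V$, and I will deduce the equality $\beta_Y(\overline{u}, \overline{v}) = \beta_V(\overline{u}, \overline{v})$, which contradicts the hypothesis of the lemma.

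To carry this out, set $\omega = \omega(\eps) = (\Omega(\eps), \ldots, \Omega(\eps))$ and let $B$ be the closed max-norm ball of radius $\Omega(\eps)$ about $(\overline{u}, \overline{v})$ in $\mathbb{R}^n \times \mathbb{R}^n$. The two key corners $(\overline{u} - \omega, \overline{v} + \omega)$ and $(\overline{u} + \omega, \overline{v} - \omega)$ both belong to $B$, since each differs from $(\overline{u}, \overline{v})$ by a vector of max-norm exactly $\Omega(\eps)$. The assumption that $B$ contains no points of $\partial \Delta^+$ forces $B \subseteq \Delta^+$; in particular $(\overline{u} + \omega, \overline{v} - \omega) \in \Delta^+$, which gives the first hypothesis $\overline{u} + \omega \prec \overline{v} - \omega$ of Lemma \ref{carab}. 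For the second hypothesis, observe that $\beta_V$ is $\mathbb{N}$-valued, and by assumption has no discontinuities on the connected set $B$, so it is locally constant there and hence constant on $B$; in particular $\beta_V(\overline{u} - \omega, \overline{v} + \omega) = \beta_V(\overline{u} + \omega, \overline{v} - \omega)$. Lemma \ref{carab} then delivers $\beta_Y(\overline{u}, \overline{v}) = \beta_V(\overline{u}, \overline{v})$, contradicting the hypothesis and completing the proof.

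The only delicate point, and the one I would expect to require the most care, is the step where absence of discontinuities on $B$ is converted into constancy of $\beta_V$ on $B$: this relies on reading ``discontinuity point of $\beta_V$'' as meaning a point where the integer-valued $\beta_V$ fails to be locally constant, together with the connectedness of $B$. Once that interpretation is fixed, the remainder of the argument is a routine application of Lemma \ref{carab} and requires no new ideas.
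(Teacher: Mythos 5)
Your proof is correct and follows essentially the same contrapositive argument as the paper's: identify the closed max-norm ball $D$ about $(\overline{u},\overline{v})$ as a hypercube whose corners $(\overline{u}-\omega,\overline{v}+\omega)$ and $(\overline{u}+\omega,\overline{v}-\omega)$ lie inside $\Delta^+$, deduce constancy of the integer-valued $\beta_V$ on $D$ from local constancy plus connectedness, and then conclude via the Lemma~\ref{carab} family. The only cosmetic difference is that you invoke Lemma~\ref{carab} directly, whereas the paper passes through Lemma~\ref{carab2} (which in this particular specialization reduces to~\ref{carab} anyway), so the two arguments are interchangeable.
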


\begin{proof}
We prove the contrapositive.

Let $(\overline{u}, \overline{v}) \in \mathbb{R}^n$ be a point of $\Delta^+$.
Let $D$ be the closed ball of radius $\Omega$ centered at $(\overline{u}, \overline{v})$.
Set $\omega=\omega(\eps)$, $u=\overline{u}-\omega$, $u'=\overline{u}+\omega$, $v=\overline{v}-\omega$,
$v'=\overline{v}+\omega$; $D$ is the closed hypercube
$\{ (\hat{u}, \hat{v}) \in \mathbb{R}^n \,\vert\, (u, v) \preceq (\hat{u}, \hat{v}) \preceq (u', v') \}$
and its boundary contains $(u, v')$ and $(u', v)$.

Now, assume that $(\overline{u}, \overline{v})$ is at distance
greater than $\Omega$ from the boundary of $\Delta^+$ (with the max norm distance). Then the point
$(u', v)$, belonging to $D$, is contained in $\Delta^+$. This means that $u' \prec v$.

Moreover, assume that $(\overline{u}, \overline{v})$ is at distance
greater than $\Omega$ from any discontinuity point of $\beta_V$.
Then there is an open subset of $\Delta^+$, containing $D$, on whose points $\beta_V$ is continuous.
Since the range of $\beta_V$ has the discrete topology, this implies that $\beta_V$ is constant on this whole
open set, hence also on $D$. Then $\beta_V(u, v')=\beta_V(u', v)$; therefore
$\beta_Y( \overline{u}, \overline{v}) = \beta_V( \overline{u}, \overline{v})$
by Lemma \ref{carab2}.
\end{proof}

%
%

\section{Ball coverings}\label{main}

Throughout this Section, $X$ will be a compact Riemannian (triangulable)
submanifold of $\mathbb R^m$. As hinted in Section \ref{Intro}, we want to get
information on $X$ out of a finite set of points. First, the points will be
sampled on $X$ itself, then even in a (narrow) neighborhood. In both cases,
the idea is to consider a covering of $X$ made of balls centered on
the sampling points.

What we get, is a double inequality which yields
an estimate of the PBNs of $X$ within a fixed distance from
the discontinuity sets of the PBNs (meant as integer functions on $\Delta^+$) of
the union $U$ of the balls of the covering, but Lemma \ref{carab2} even offers the exact
value of it at points sufficiently far from the discontinuity sets.

\subsection{Points on $X$}\label{app}

Let $\delta < \sqrt{\frac{3}{5}}\tau$ and let $L=\{l_1,\ldots,l_k\}$ be a set of points of $X$ such
that for every $p\in X$ there exists an $l_j\in L$ for which
$\|p-l_j\| < \dfrac{\delta}{2}$. Let $U$ be the union of the
balls $B(l_j, \delta)$ of radius $\delta$ centered at $l_j$, $j=1,\ldots, k$. So
all conditions of
Proposition \ref{smale} are satisfied. As before, set $\beta_U = \beta_{(U, f_U,i)}$ and $\beta_X = \beta_{(X, f_X,i)}$. \\

\begin{theorem}\label{wrap}
If $( u,  v)$ is a point of $\Delta^+$ and if $ u + \omega(\delta) \prec
 v - \omega(\delta)$, where $
\omega(\delta)=(\Omega(\delta),\ldots,\Omega(\delta)) \in \mathbb{R}^n$,
then
\[\beta_U( u- \omega(\delta),  v+ \omega(\delta))\leq \beta_X( u, v)\leq \beta_U( u+ \omega(\delta), v- \omega(\delta))\]

If $\overline{u}, \overline{v} \in \mathbb{R}^n$ are such that
\[\beta_X( \overline{u}, \overline{v}) \neq \beta_U( \overline{u}, \overline{v})\]
then there is at least a point $(\tilde{u}, \tilde{v})$ with max norm
$\| (\overline{u}, \overline{v}) - (\tilde{u}, \tilde{v} \| \le \Omega(\delta)$, which is either an element of the boundary of $\Delta^+$ or a discontinuity
point of $\beta_U$.

\begin{proof}
By Lemmas \ref{lemmas3} and \ref{blind}, with $Y=X$, $V=U$.
\end{proof}

\end{theorem}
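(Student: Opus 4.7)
The plan is to reduce both assertions to the general results of Section~3 by setting $Y = X$ and $V = U$, and then invoking Lemmas \ref{lemmas3} and \ref{blind}.

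First I would verify that the hypotheses of the Section~3 framework hold. The space $U$ is a finite union of closed balls in $\mathbb{R}^m$, hence compact and triangulable. Under the density assumption on $L$ and the bound $\delta < \sqrt{3/5}\,\tau$, Proposition \ref{smale} ensures that $X$ is a deformation retract of $U$, with retraction $\pi:U \to X$ equal to the nearest-point projection and straight-line homotopy $F(q,t) = (1-t)q + t\pi(q)$ as given in Remark \ref{rem}. The one nontrivial check is the compatibility $(\pi \circ F)(q,t) = \pi(q)$ for all $q \in U$ and $t \in I$: the segment $F(q,\cdot)$ lies in the affine normal plane through $\pi(q)$ and, by Remark \ref{rem}, stays inside a single ball of $U$ containing $q$; since $\|F(q,t) - \pi(q)\| \le \|q - \pi(q)\| < \delta < \tau$, the fiber description $\pi^{-1}(\pi(q)) = U \cap Tan_{\pi(q)}^{\bot} \cap B_{\tau}(\pi(q))$ from Remark \ref{rem} places $F(q,t)$ in $\pi^{-1}(\pi(q))$, i.e.\ $\pi(F(q,t)) = \pi(q)$. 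This is the one step that actually uses geometry and is where I would expect all the (mild) difficulty to lie.

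With these hypotheses in hand, Lemma \ref{lemmas3} applied with $\eps = \max_{q \in U}\|\pi(q) - q\|$ yields
\[ \beta_U(u - \omega(\eps), v + \omega(\eps)) \le \beta_X(u, v) \le \beta_U(u + \omega(\eps), v - \omega(\eps)). \]
By Remark \ref{rem} we have $\eps \le \delta$; since the scalar modulus $\Omega$ is clearly non-decreasing in its argument, $\omega(\eps) \preceq \omega(\delta)$ componentwise. Because $\beta_U$ is non-decreasing in its first variable and non-increasing in its second, replacing $\omega(\eps)$ by $\omega(\delta)$ only weakens both inequalities, which proves the first statement. The second statement follows directly from Lemma \ref{blind} with the same substitution $Y = X$, $V = U$, using $\Omega(\eps) \le \Omega(\delta)$ to loosen the bound on $\| (\overline u, \overline v) - (\tilde u, \tilde v)\|$ from $\Omega(\eps)$ to $\Omega(\delta)$. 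Apart from the compatibility check on $(\pi, F)$ discussed above, everything is monotonicity of $\beta_U$ and direct substitution.
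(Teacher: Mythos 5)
Your proof is correct and follows the same route as the paper: reduce to the Section~3 framework with $Y=X$, $V=U$, and invoke Lemmas \ref{lemmas3} and \ref{blind}. You add two useful details the paper leaves implicit — the explicit verification of the fiber-compatibility condition $(\pi\circ F)(q,t)=\pi(q)$ required at the start of Section~3, via the description $\pi^{-1}(\pi(q))=U\cap Tan_{\pi(q)}^{\bot}\cap B_{\tau}(\pi(q))$ in Remark~\ref{rem}, and the monotonicity step passing from $\omega(\eps)$ with $\eps=\max_{q\in U}\|\pi(q)-q\|$ to the stated $\omega(\delta)$ using $\eps<\delta$ — and both of these are exactly the right checks.
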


\subsection{Points near $X$}\label{point}

So far we have approximated $X$ by points picked up on
$X$ itself, but it is also possible to choose the points near $X$,
by respecting some constraints. Once more, this is possible thanks
to a result of~\cite{NiSmWe08}.

\begin{proposition} (Prop. 7.1 of~\cite{NiSmWe08}) \label{smale2}
Let $L=\{l_1,\ldots,l_k\}$ be a set of points in the tubular
neighborhood of radius $s$ around $X$ and
$U=\bigcup_{j=1,\ldots,k}B(l_j,\delta)$ be the union of the balls of
$\mathbb R^m$ centered at the points of $L$ and with radius
$\delta$. If for every point $p\in X$, there exist a point $l_j
\in L$ such that $\|p-l_j\|<s$, then $U$ is a
deformation retract of $X$, for all $s<(\sqrt{9}-\sqrt{8})\tau$
and
$\delta\in\left(\frac{(s+\tau)-\sqrt{s^2+\tau^2-6s\tau}}{2},\frac{(s+\tau)
+ \sqrt{s^2+\tau^2-6s\tau}}{2}\right).$

\end{proposition}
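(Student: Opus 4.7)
The plan is to adapt the proof strategy of Proposition \ref{smale}, replacing points on $X$ by points in a tubular neighborhood. The first step is to check that the union of balls $U$ is entirely contained in $Tub_\tau X$. For any $q\in U$ there is some $l_j$ with $\|q-l_j\|<\delta$ and, by the hypothesis on $L$, some $p\in X$ with $\|l_j-p\|<s$, so $d(q,X)<\delta+s$ by the triangle inequality. A short algebraic check shows that the upper bound on $\delta$ given in the statement, combined with $s<(\sqrt{9}-\sqrt{8})\tau=(3-2\sqrt{2})\tau$, is strictly smaller than $\tau-s$: squaring the desired inequality $\sqrt{s^2+\tau^2-6s\tau}<\tau-3s$ reduces to $s^2<9s^2$. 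Hence $\delta+s<\tau$, so $U\subset Tub_\tau X$ and the canonical nearest-point projection $\pi_0$ restricts to a continuous map $\pi:U\to X$.

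Having $\pi$, I would use the obvious straight-line candidate $F(q,t)=(1-t)q+t\pi(q)$. Continuity, $F(q,0)=q$, $F(q,1)=\pi(q)$, and the fact that $F$ fixes $X$ pointwise are all automatic from the canonical projection, exactly as in Remark \ref{rem}. So a strong deformation retraction of $U$ onto $X$ exists the moment we know that $F$ takes its values in $U$. The only real task is therefore to show, for every $q\in U$, that the whole segment $[q,\pi(q)]$ lies inside $U$. The natural strategy is to pick some $l_j$ with $q\in B(l_j,\delta)$ and prove also $\pi(q)\in B(l_j,\delta)$; then by convexity of balls the segment stays inside that single ball, hence inside $U$.

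The main obstacle, and the only genuinely nontrivial step, is the geometric estimate that controls $\|\pi(q)-l_j\|$. A naive triangle inequality using $\|\pi(q)-q\|\le\|\pi_0(l_j)-q\|\le s+\delta$ only gives $\|\pi(q)-l_j\|<2\delta+s$, which is too weak. The sharper bound must exploit the condition number $1/\tau$: a ball of radius $\tau$ tangent to $X$ from outside does not cross $X$, so locally $X$ cannot curve faster than a sphere of radius $\tau$. Intersecting $B(l_j,\delta)$ with such a $\tau$-sphere tangent at $\pi_0(l_j)$, and tracking when $\pi(q)$ is forced back into $B(l_j,\delta)$ for any $q\in B(l_j,\delta)$ with $\pi_0(l_j)$ at distance at most $s$ from $l_j$, reduces by a direct trigonometric computation to the quadratic inequality
\[
\delta^2-(s+\tau)\delta+2s\tau<0,
\]
whose roots are precisely $\tfrac{(s+\tau)\pm\sqrt{s^2+\tau^2-6s\tau}}{2}$. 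Reality of these roots is equivalent to $s^2-6s\tau+\tau^2>0$, i.e.\ $s<(3-2\sqrt{2})\tau$, explaining both numerical hypotheses of the statement. Once this estimate is in hand, the argument concludes as in Proposition \ref{smale}.
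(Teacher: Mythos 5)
The paper does not actually prove this statement: it is quoted directly as Proposition 7.1 of Niyogi, Smale and Weinberger~\cite{NiSmWe08}, so there is no in-paper argument to compare yours against; your proposal should be judged as a reconstruction of the cited proof.

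The skeleton of your reconstruction is sound and does mirror the cited source: establishing $U \subset Tub_\tau X$, restricting the canonical projection $\pi_0$ to get $\pi : U \to X$, and using the straight-line strong deformation retraction $(q,t)\mapsto (1-t)q + t\pi(q)$ are the correct ingredients, and your algebra is right. In particular, the upper endpoint of the $\delta$-interval is strictly below $\tau - s$ (your squaring argument reduces to $0 < 8s^2$), and positivity of the discriminant $s^2 - 6s\tau + \tau^2$ for small $s$ is exactly $s < (3 - 2\sqrt{2})\tau = (\sqrt{9} - \sqrt{8})\tau$. You also correctly identify that everything hinges on showing $\pi(q)\in B(l_j,\delta)$ whenever $q\in B(l_j,\delta)$, and that this should reduce to the quadratic $\delta^2 - (s+\tau)\delta + 2s\tau < 0$, whose roots match the stated interval.

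The problem is that this last reduction \emph{is} the proposition, and you assert it rather than prove it. Writing that ``a direct trigonometric computation'' with a $\tau$-sphere tangent at $\pi_0(l_j)$ yields the quadratic is a promissory note placed exactly where~\cite{NiSmWe08} have to do real work (their Lemma 7.1, which in turn leans on their earlier estimates relating the reach $\tau$ to chord lengths and tangent-plane angles). You give no derivation of $\|\pi(q)-l_j\| < \delta$ from the three available facts $\|q-l_j\|<\delta$, $\|l_j - \pi_0(l_j)\|<s$, and the reach bound, and the sketchy geometric setup you indicate (one tangent sphere at $\pi_0(l_j)$) is not obviously the right configuration to carry it. As it stands, this is a correct frame around an unproved core, i.e.\ a genuine gap at the only nontrivial step.
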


Then, as with Theorem \ref{wrap}, we have, with an analogous proof:

\begin{theorem}\label{wrap2}
Under the hypotheses of Proposition \ref{smale2}, if $( u,  v)$
is a point of $\Delta^+$ and if $ u + \omega(\delta+s) \prec
 v - \omega(\delta+s)$, where $
\omega(\delta+s)=(\Omega(\delta+s),\ldots,\Omega(\delta+s)) \in \mathbb{R}^n$,
then
\[\beta_U( u- \omega(\delta+s),  v+ \omega(\delta+s))\leq \beta_X( u, v)\leq \beta_U( u+ \omega(\delta+s), v- \omega(\delta+s))\]

If $\overline{u}, \overline{v} \in \mathbb{R}^n$ are such that
\[\beta_X( \overline{u}, \overline{v}) \neq \beta_U( \overline{u}, \overline{v})\]
then there is at least a point $(\tilde{u}, \tilde{v})$ with max norm
$\| (\overline{u}, \overline{v}) - (\tilde{u}, \tilde{v} \| \le \Omega(\delta+s)$, which is either an element of the boundary of $\Delta^+$ or a discontinuity
point of $\beta_U$.

\end{theorem}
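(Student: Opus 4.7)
The plan is to follow the same template that yielded Theorem \ref{wrap}, applying the abstract machinery of Section 3 with $Y=X$ and $V=U$, the only change being that the bound on the distance each point is moved by the retraction is now $\delta+s$ rather than $\delta$. Concretely, I would first verify that the hypotheses of Section 3 are met: $X$ is compact Riemannian (triangulable), and $U$, being a finite union of closed balls in $\mathbb R^m$, is compact and triangulable. By Proposition \ref{smale2}, the conditions on $s$ and $\delta$ guarantee that $X$ is a deformation retract of $U$.

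Next I would exhibit an explicit retraction $r$ and homotopy $G$ satisfying the extra condition that $(r\circ G)(v,t)=r(v)$ for all $v\in U$, $t\in I$. As in Remark \ref{rem}, one takes $r$ to be the restriction to $U$ of the canonical projection $\pi_0:Tub_\tau X\to X$ (nearest-point projection), which is well defined because $U\subseteq Tub_\tau X$ under the assumptions on $s$ and $\delta$, and the homotopy to be the straight-line homotopy $G(v,t)=(1-t)v+t\,r(v)$. Since this trajectory lies on the normal segment from $v$ to its unique nearest point on $X$, projection along $\pi_0$ keeps it pinned at $r(v)$ throughout, giving the required property.

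The third step is the bound on the displacement $\eps=\max_{v\in U}\|r(v)-v\|$. Any $v\in U$ lies in some ball $B(l_j,\delta)$, so $\|v-l_j\|<\delta$; by hypothesis $l_j$ is at distance less than $s$ from $X$, so $v$ is within $\delta+s$ of $X$, and hence $\|r(v)-v\|\le\delta+s$. Consequently $\omega(\eps)\preceq\omega(\delta+s)$, and substituting into Lemma \ref{lemmas3} and Lemma \ref{blind} yields the two conclusions of the theorem verbatim (monotonicity of $\beta_U$ in each variable absorbs the inequality between $\omega(\eps)$ and $\omega(\delta+s)$).

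The main obstacle is less a difficulty than a bookkeeping point: one must be careful that in Proposition \ref{smale2} the sample points lie in a tubular neighborhood, so the retraction we pick is indeed $\pi_0$ restricted to $U$, and that $U$ stays inside $Tub_\tau X$ so that $\pi_0$ is single-valued and smooth on all of $U$. This is exactly what the range of allowed $\delta$ in Proposition \ref{smale2} is designed to ensure, and once it is in hand, the rest of the argument is the direct translation of the proof of Theorem \ref{wrap}.
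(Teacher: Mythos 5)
Your proposal is correct and matches the paper's intent exactly: the paper gives Theorem \ref{wrap2} by declaring the proof ``analogous'' to that of Theorem \ref{wrap}, i.e.\ an application of Lemmas \ref{lemmas3} and \ref{blind} with $Y=X$, $V=U$, and you have simply spelled out the details of that analogy, including the key displacement bound $\|r(v)-v\|<\delta+s$ (since $v$ lies within $\delta$ of a center $l_j$, which lies within $s$ of $X$) and the check that $U\subseteq Tub_\tau X$.
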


\subsection{The 1D case}\label{ex}

We now show how Theorem \ref{wrap} can be used for
applications. In the case of filtering functions with one-dimensional range, and in the 1D reduction
of Section 2.1 of~\cite{CaDiFe} and Section 4 of~\cite{CeDiFeFrLa13}, the subsets of $\mathbb{R}^2$
where the PBN functions are not continuous
form (possibly infinite) line segments:~\cite{FrLa01}. We recall that by Lemma \ref{blind},
the {\em blind strips}, i.e. the regions where the equality $\beta_Y( \overline{u}, \overline{v}) = \beta_V( \overline{u}, \overline{v})$
is not granted, are $2 \omega(\eps)$ wide strips around such segments.

In the case of ball coverings as in Section \ref{app}, the width of the blind
strips is a representation of the approximation error, in that it is
directly related to $\Omega(\delta)$, where $1/\delta$ represents the density
of the sampling.

Let $X$ be a circle of radius 4 in $\mathbb R^2$
(Figure \ref{fig1}); we observe that $\tau$ is exactly the radius
of $X$, so $\tau=4$. In order to create a well defined approximation we
need that $\delta < \sqrt{\frac{3}{5}}\tau$.

\begin{figure}[ht]
  \centering
  \begin{minipage}[ht]{0.4\linewidth}
     \centering
     \includegraphics[scale=0.2]{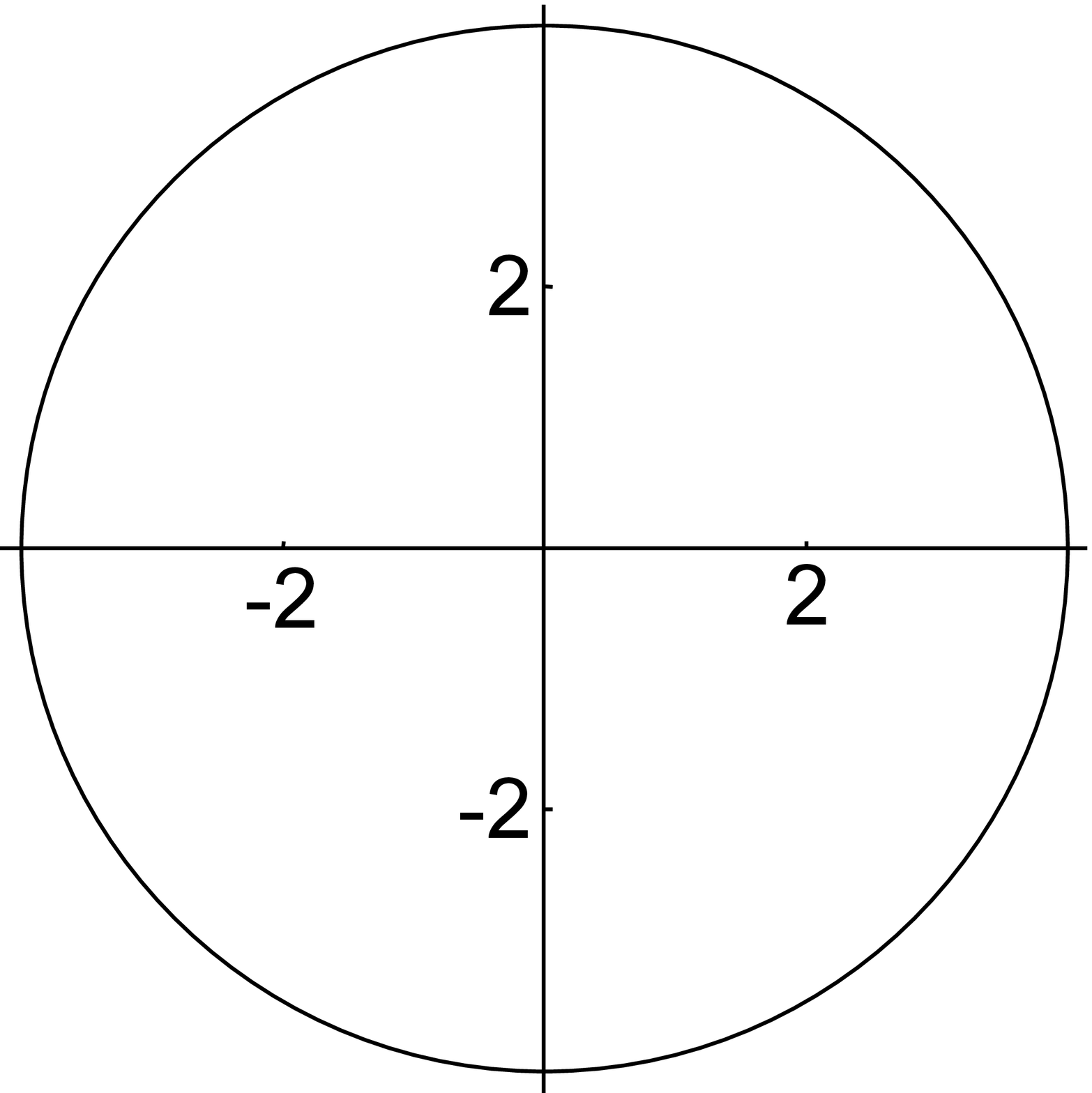}
     \caption{The circle of radius 4, $X$.} \label{fig1}
  \end{minipage}%
  \hfill%
  \begin{minipage}[ht]{0.4\linewidth}
     \centering
     \includegraphics[scale=0.21]{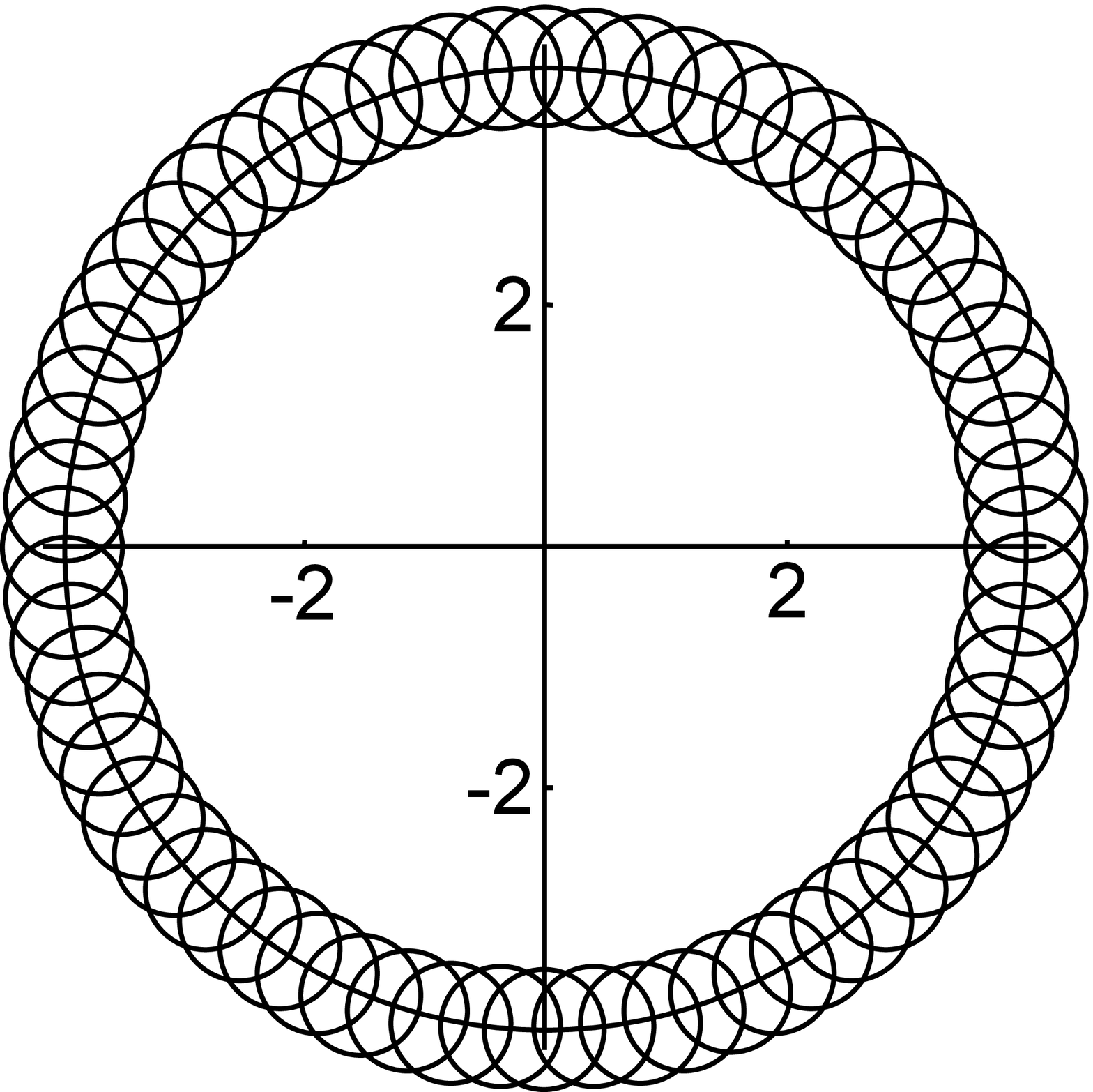}
     \caption{The ball union $U$.} \label{fig2}
  \end{minipage}
\end{figure}

In the first example
we have taken $\delta=0.5$. Now, to satisfy the hypothesis of
Theorem \ref{wrap} (that for every $p\in X$ there exists an $l_j\in L$
such that $\|p-l_j\|<\frac{\delta}{2}$), we have
chosen 64 points $l_j$ on X. Moreover we have sampled $X$
uniformly, so that there is a point every $\frac{\pi}{32}$ radians
(Figure \ref{fig2}). We stick to the monodimensional case, choosing \hbox{$f:\mathbb R^2
\to \mathbb R$}, with \hbox{$f(x,y)=\mathrm{abs}(y)$}. $U$ is the resulting ball union.

\begin{figure}[htbp]
\centering
  \begin{minipage}[ht]{0.48\linewidth}
     \centering
     \includegraphics[scale=0.2]{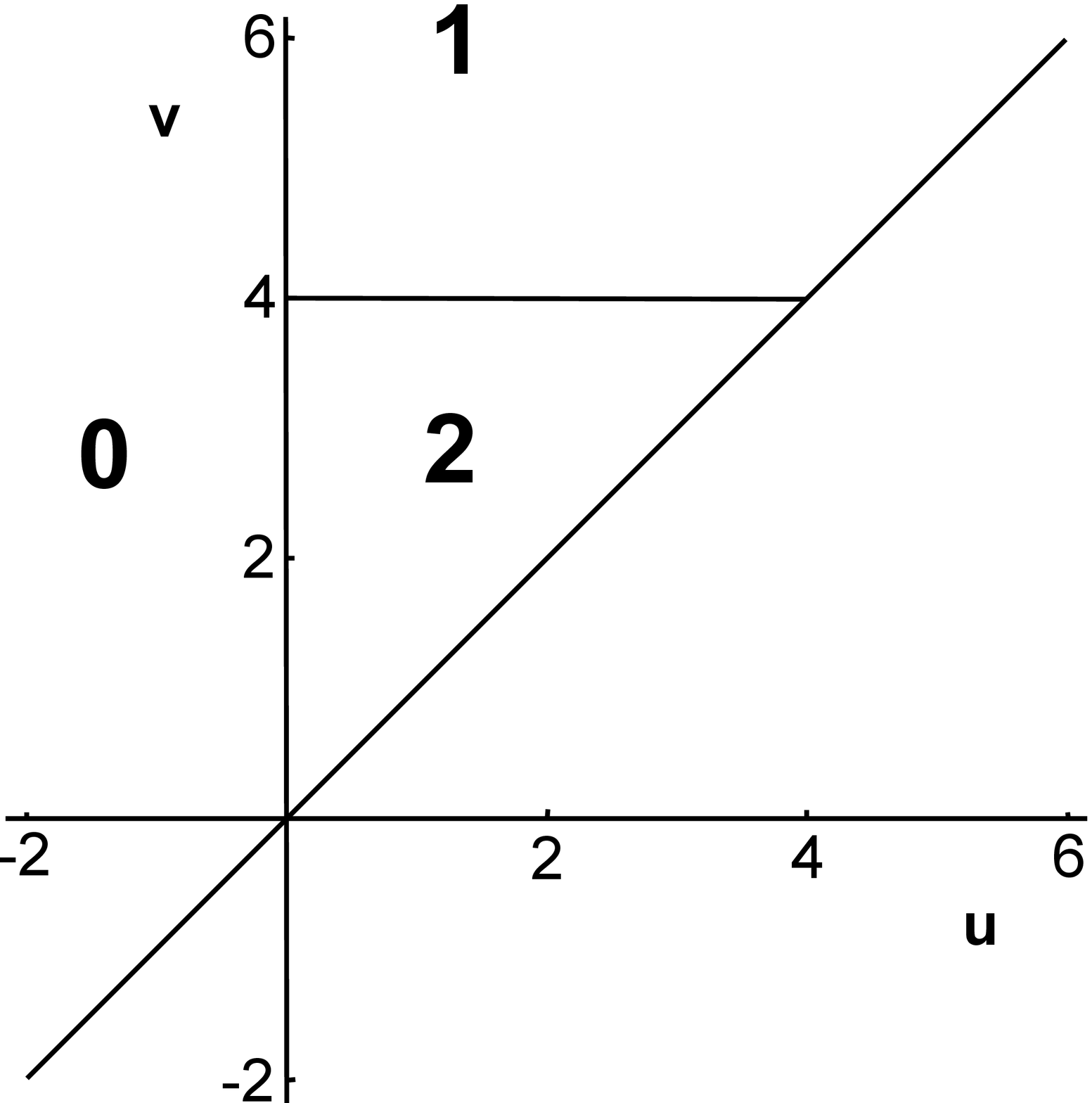}
     \caption{The representation of $\beta_{(X,f_X,0)}$, the $0$-PBNs of $X$.} \label{fig3}
  \end{minipage}
  \hfill%
  \begin{minipage}[ht]{0.51\linewidth}
     \centering
     \includegraphics[scale=0.2]{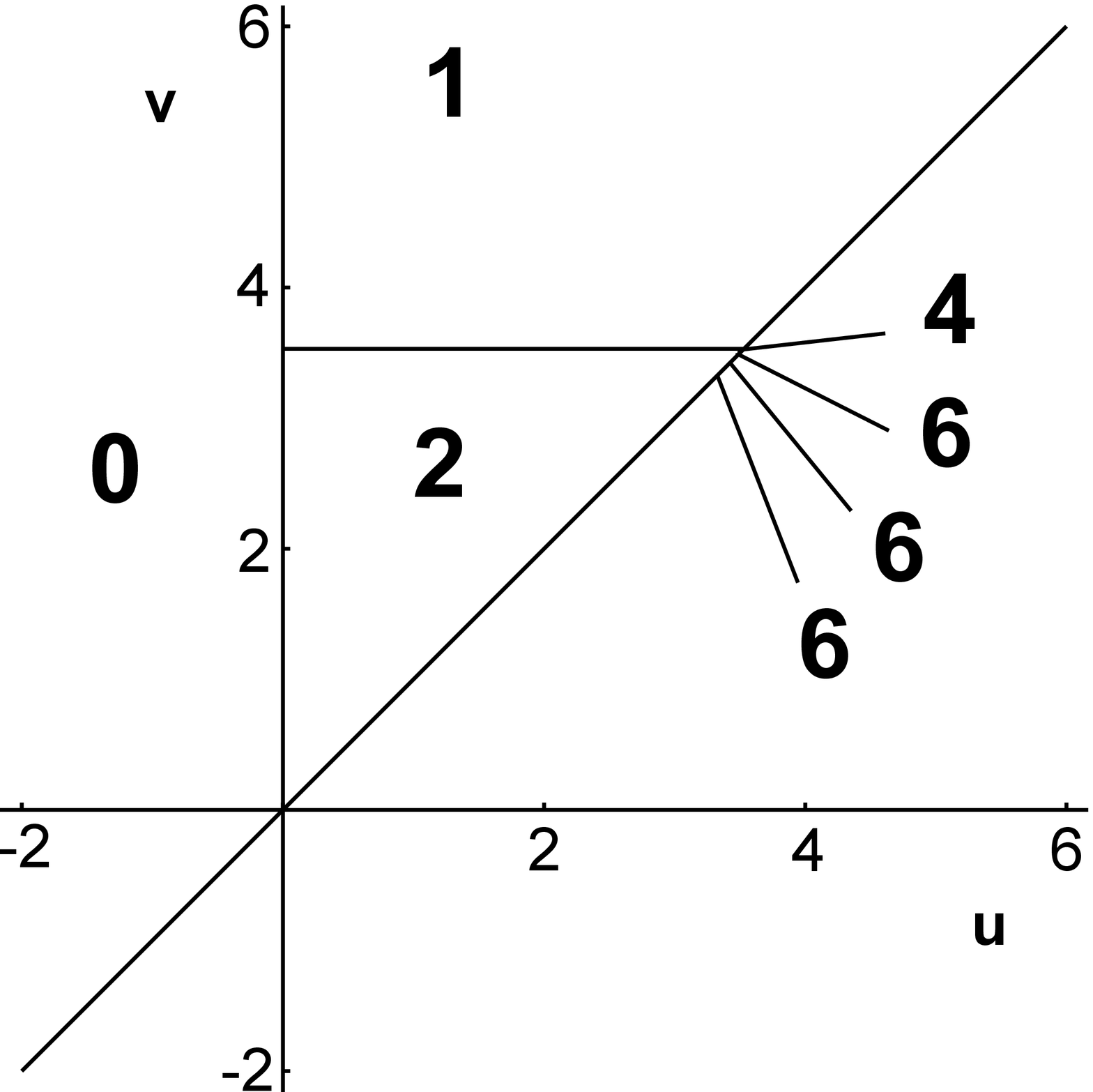}
     \caption{The representation of $\beta_{(U,f_U,0)}$,
     the $0$-PBNs of the ball union $U$.} \label{fig4}
  \end{minipage}
  \hfill%
  \begin{minipage}[ht]{0.5\linewidth}
     \centering
     \includegraphics[scale=0.4]{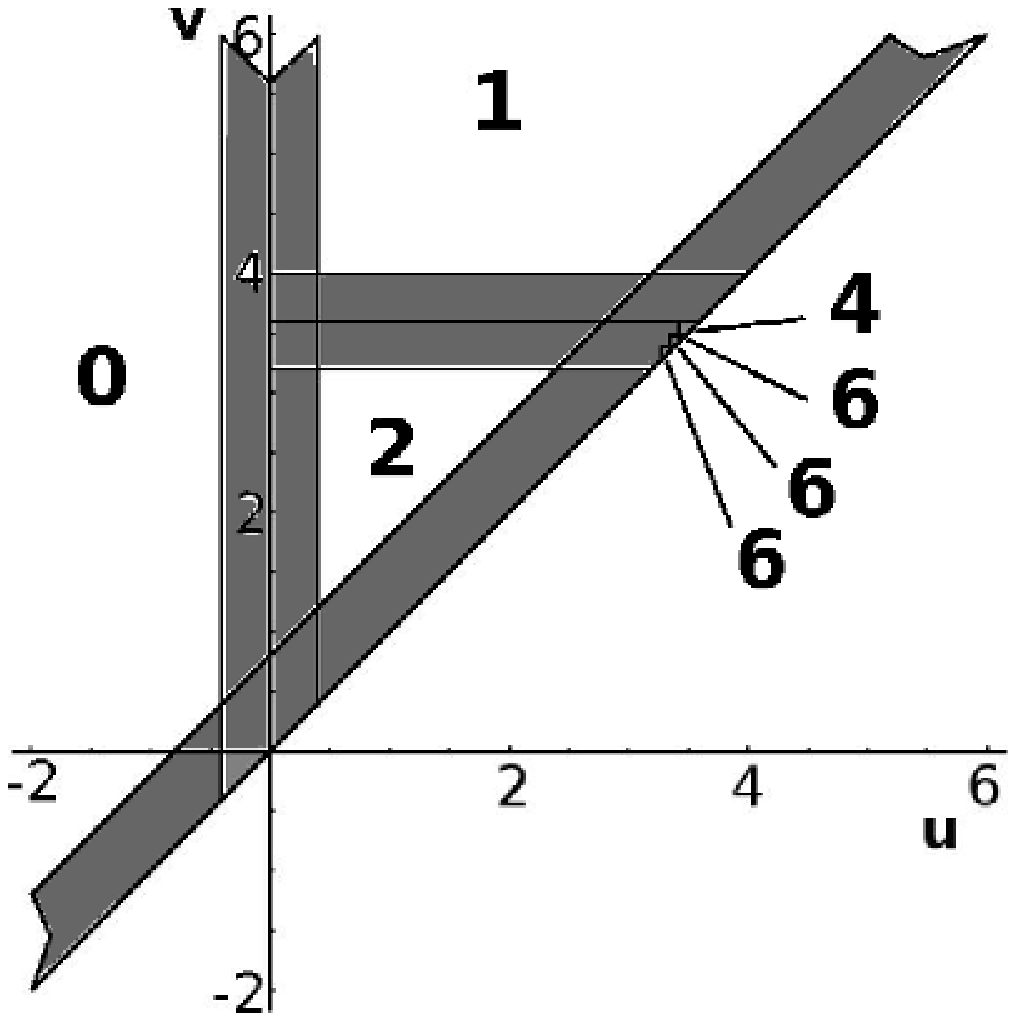}
     \caption{The blind strips of $\beta_{(U,f_U,0)}$.} \label{fig5}
  \end{minipage}
\end{figure}

Figures \ref{fig3} and \ref{fig4} represent the PBN functions at degree zero of $X$ and
$U$ respectively. $\Delta^+$ is the half-plane above the diagonal line, and the numbers
are the values of the PBNs in the triangular regions they are written in.
In Figure \ref{fig3} there is only one big triangle where the value 2 signals the
two different connected components generated by $f_X$. The two
connected components collapse to one at value $4$. In Figure
\ref{fig4} there is also a big triangle representing the two
connected components, but they collapse at value $3.53106$.
Moreover there are 4 other very small triangles near the diagonal,
representing more connected components generated by the approximation
error. In the last figure (Figure \ref{fig5})
the blind strips around the discontinuity lines of
$\beta_{(U,f_U,0)}$ are shown. The width of these strips,
since $\Omega(\delta) = 0.5$, is equal to $2\Omega(\delta)=1$.
This figure illustrates the idea underlying Theorem \ref{wrap}. Taken a
point $(u,v)$ outside the strips, the values of the PBNs of $U$ at $(u-\Omega(\delta),v+\Omega(\delta))$
and $(u+\Omega(\delta),v-\Omega(\delta))$ are the same. So also
the value of the PBNs of $X$ at $(u,v)$ is determined.
Figures \ref{fig3'}, \ref{fig4'}, \ref{fig5'} depict, in analogous way, the
(obviously much simpler) PBNs of degree 1.

\begin{figure}[htbp]
\centering
  \begin{minipage}[ht]{0.48\linewidth}
     \centering
     \includegraphics[scale=0.2]{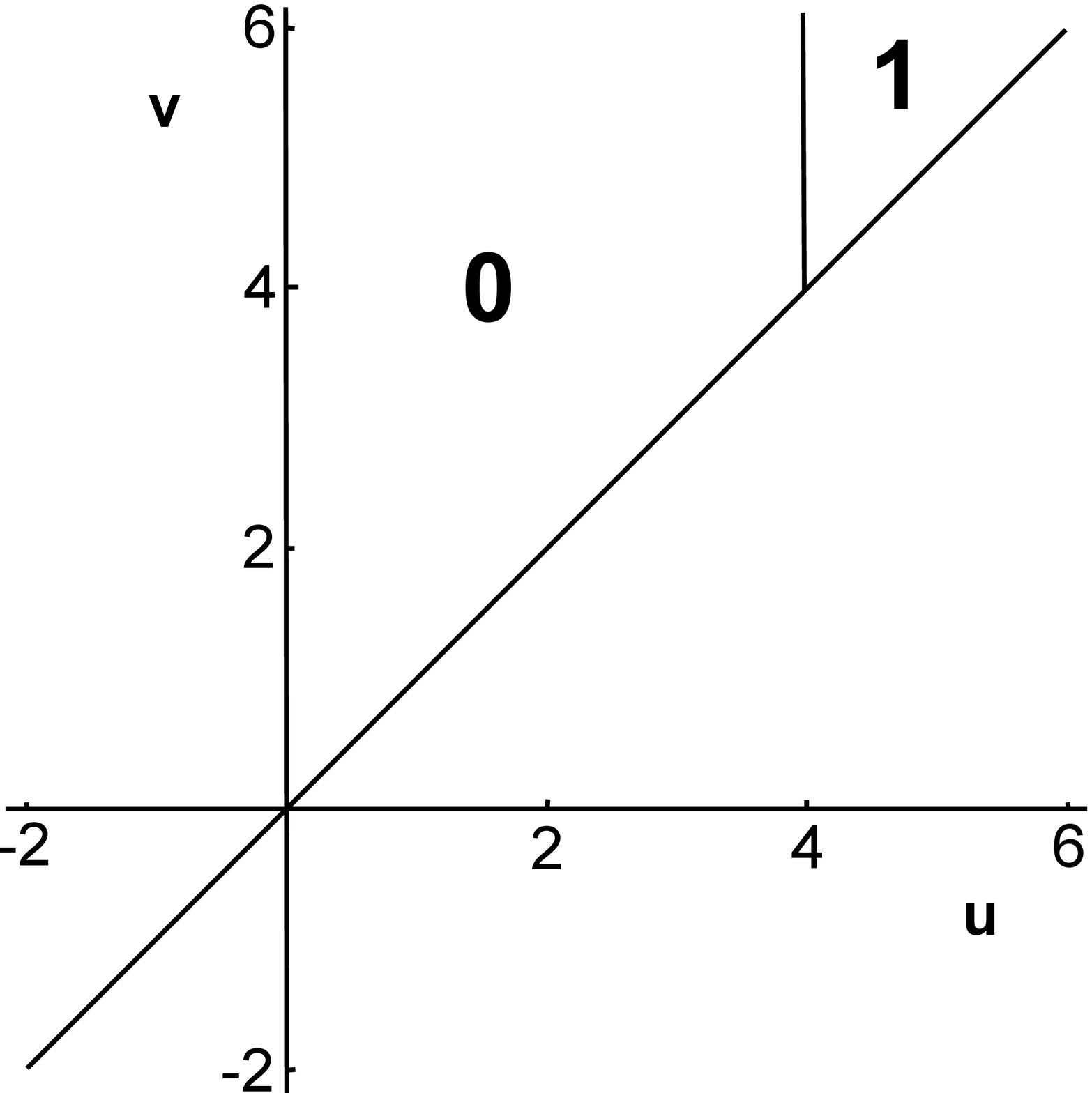}
     \caption{The representation of $\beta_{(X,f_X,1)}$, the $1$-PBNs of $X$.} \label{fig3'}
  \end{minipage}
  \hfill%
  \begin{minipage}[ht]{0.51\linewidth}
     \centering
     \includegraphics[scale=0.2]{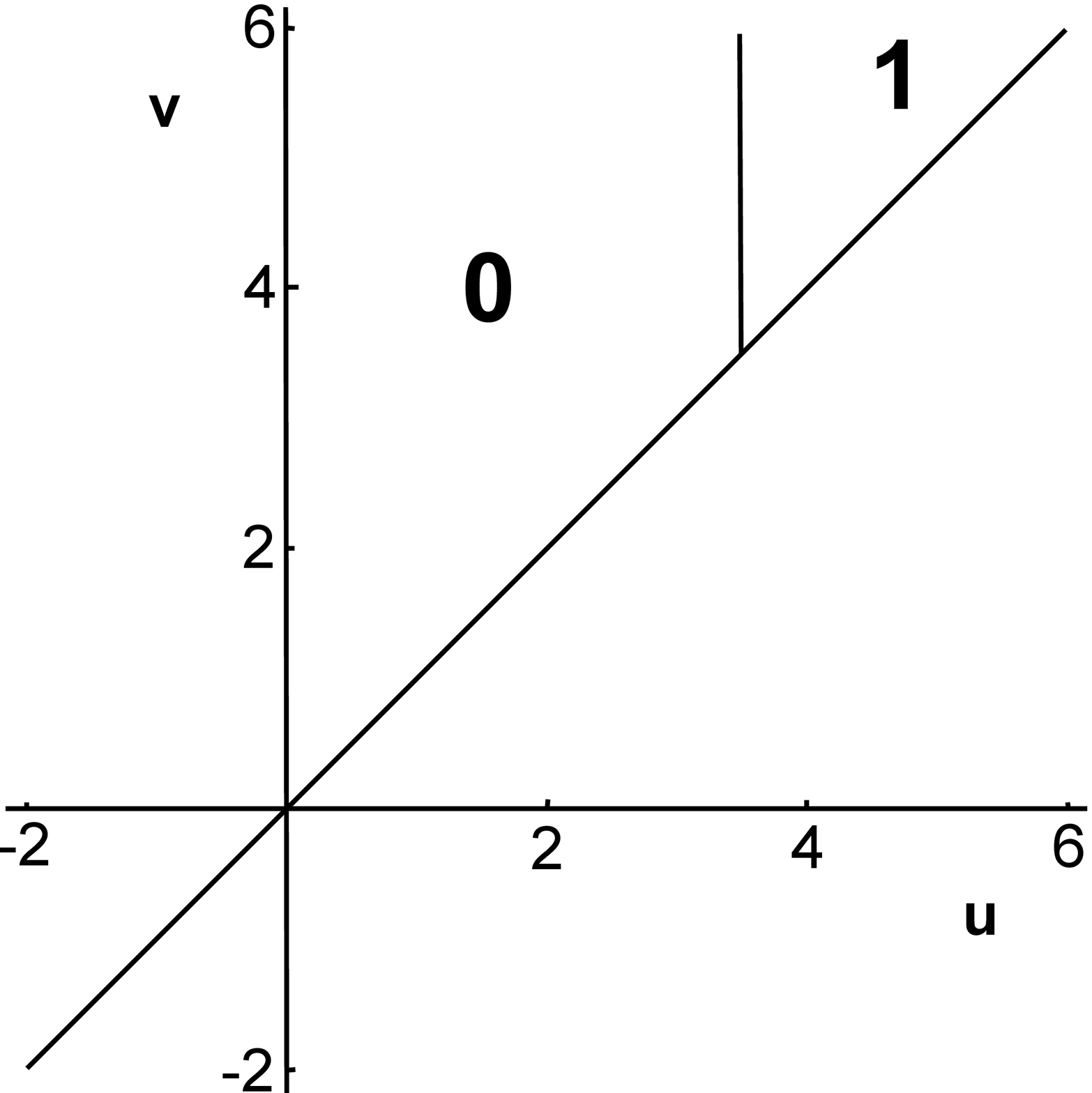}
     \caption{The representation of $\beta_{(U,f_U,1)}$,
     the $1$-PBNs of the ball union $U$.} \label{fig4'}
  \end{minipage}
  \hfill%
  \begin{minipage}[ht]{0.5\linewidth}
     \centering
     \includegraphics[scale=0.4]{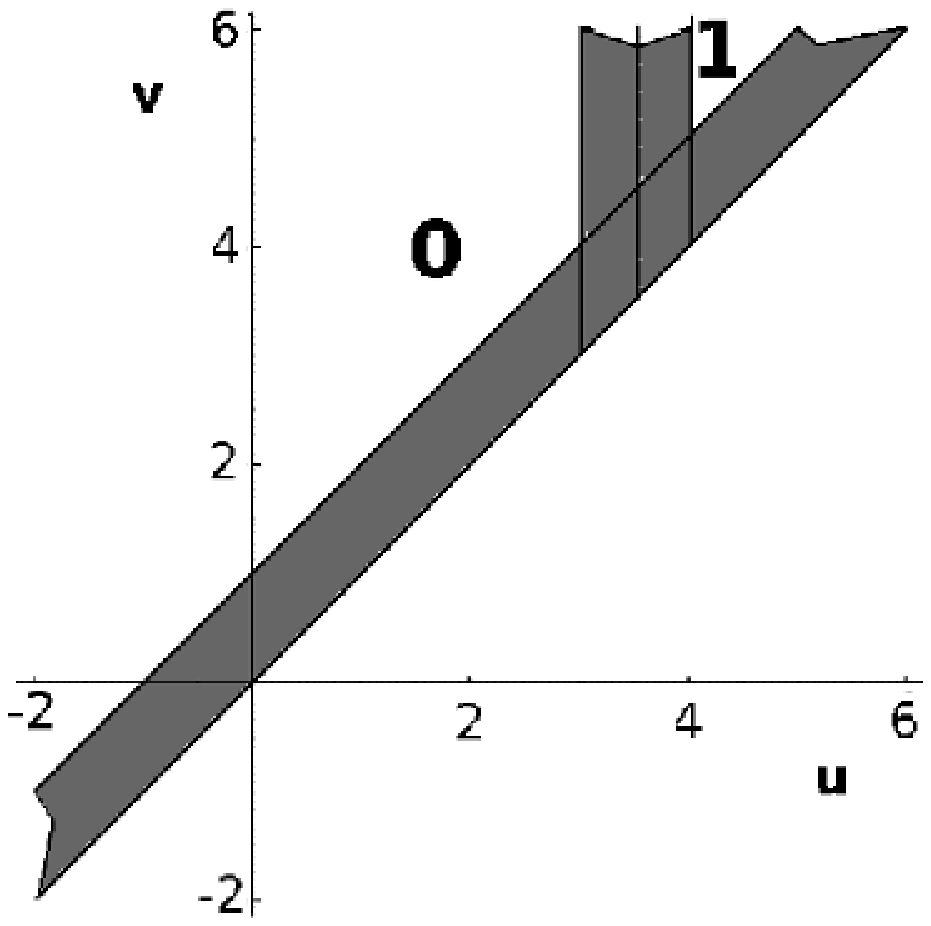}
     \caption{The blind strips of $\beta_{(U,f_U,1)}$.} \label{fig5'}
  \end{minipage}
\end{figure}

For a second example we have chosen the points $l_j$ not
necessarily on $X$. We have satisfied the hypothesis of Proposition
\ref{smale2}, choosing $s=0.25$ and $\delta=0.55$. Then, in order to cover $X$ well, we have chosen a point
every $\frac{\pi}{48}$ radians, for a total of 96 points. But this
time the points are either $0$ or $0.1$ or $0.2$ away from $X$. Figure
\ref{fig6} shows the resulting ball union $U'$. As in
the previous case, in the representation of
$\beta_{(U',f_{U'},0)}$ (Figure \ref{fig7})
there is one big triangle showing two connected components and this
time they collapse at value $3.40955$. Compared to Figure
\ref{fig4}, there are many more small triangles generated by the
asymmetry of the sampling. The width of the blind strips in
Figure \ref{fig8} is $2\Omega(\delta+s)=1.6$, so there is still
the central triangle. This means that, although the error in the
approximation is much bigger, the blind strips do not cover the entire
figure, leaving the topological information intact at least in some
small areas of $\Delta^+$.

\begin{figure}[htbp]
\centering
  \begin{minipage}[ht]{0.45\linewidth}
     \centering
     \includegraphics[scale=0.2]{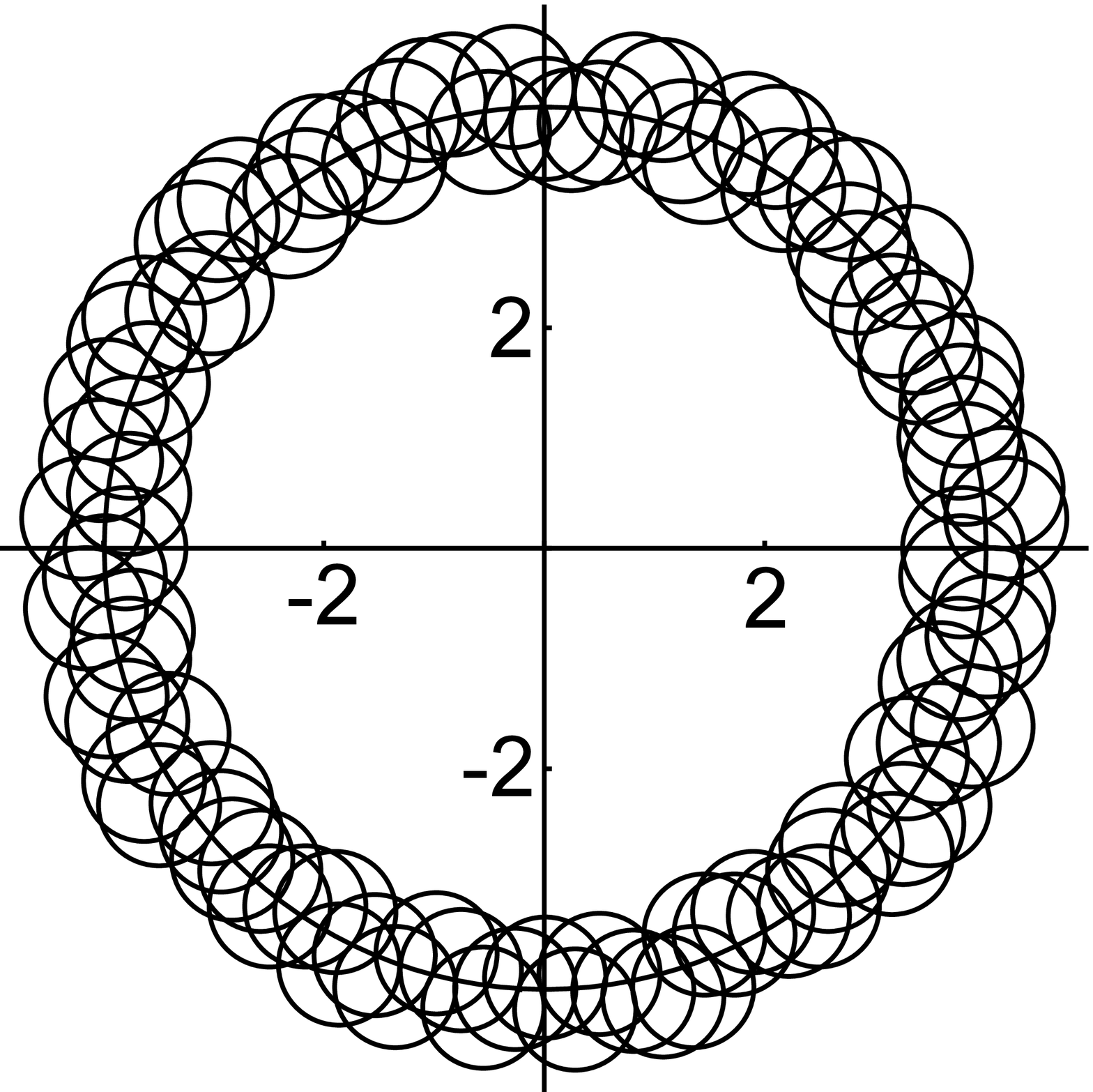}
     \caption{The ball union $U'$.} \label{fig6}
  \end{minipage}
  \hfill%
  \begin{minipage}[ht]{0.51\linewidth}
     \centering
     \includegraphics[scale=0.2]{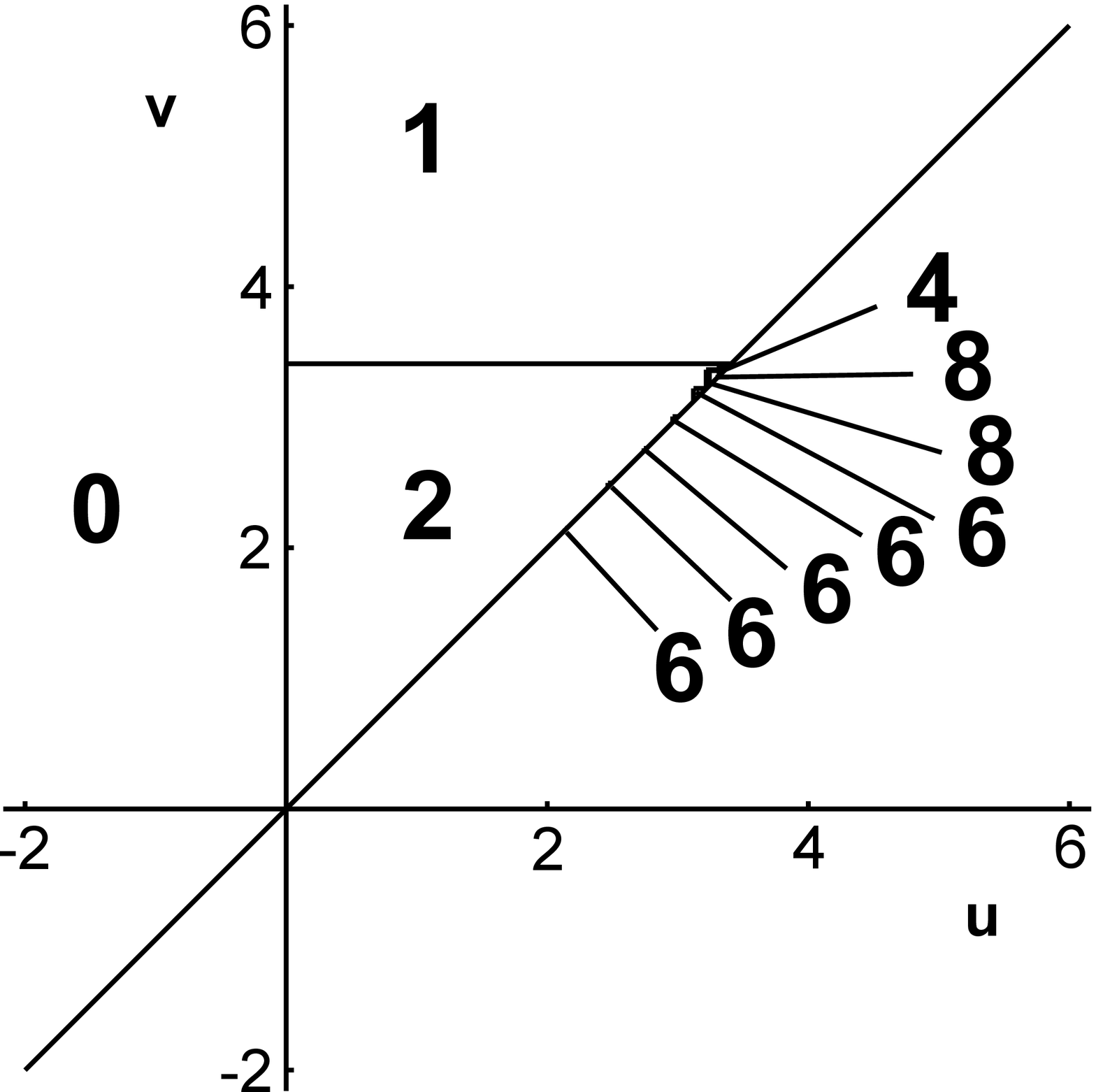}
     \caption{The representation of $\beta_{(U',f_{U'},0)}$,
     the $0$-PBNs of the ball union $U'$.} \label{fig7}
  \end{minipage}
  \hfill%
  \begin{minipage}[ht]{0.5\linewidth}
     \centering
     \includegraphics[scale=0.5]{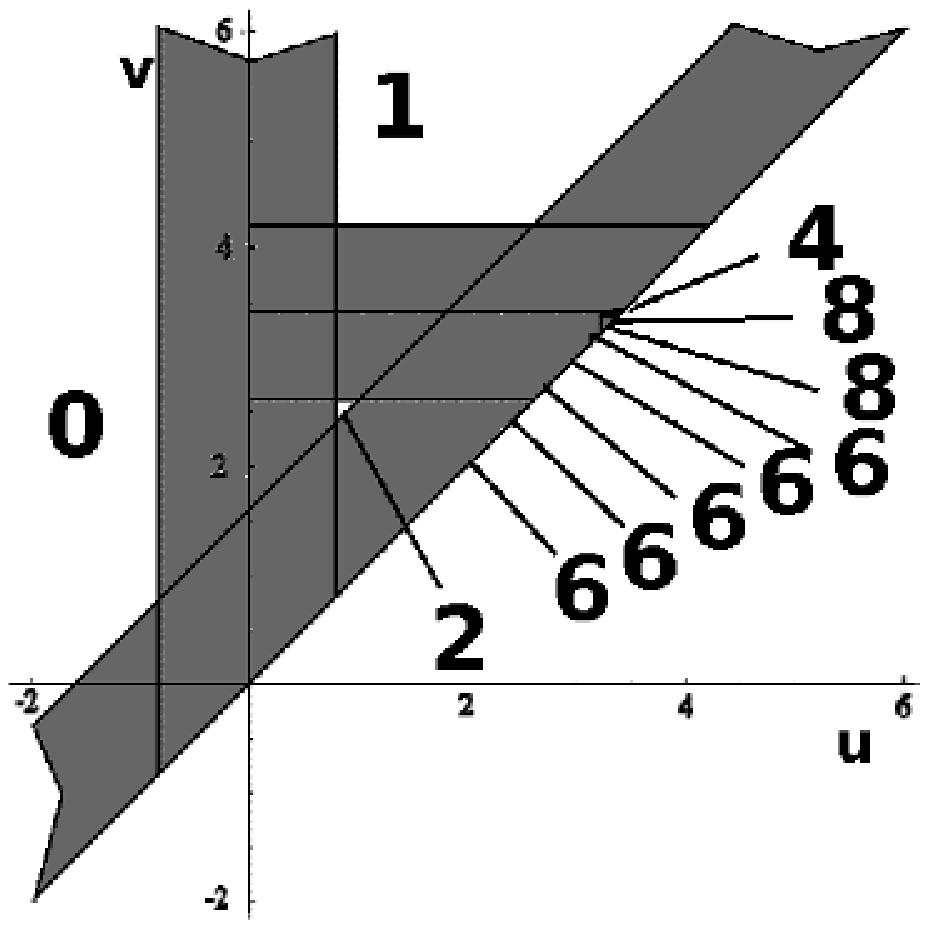}
     \caption{The blind strips of $\beta_{(U',f_{U'},0)}$.} \label{fig8}
  \end{minipage}
\end{figure}

\section{Shape comparison}\label{comparison}

So, outside the blind strips, we can get the values of PBNs of the sampled object $X$ out of
its ball covering $U$ (as always, given a filtering function $ f$ defined on the ambient space $\mathbb R^m$).
This fact gives us the possibility of assessing shape dissimilarity of two pairs $(X,  f)$, $(Y,  g)$
although we only know ball coverings $U$, $W$ of them, respectively. For this we need the notion of natural pseudodistance,
introduced in~\cite{DoFr04} and further studied in~\cite{DoFr07,DoFr09}, and its relationship with PBNs.

We recall that, for any two topological spaces $X,Y$ endowed with
two continuous functions $ f: X \to \R^n$, $ g: Y
\to \R^n$, we have the following definition.

\begin{definition}
The \emph{natural pseudodistance} between the pairs $(X,
f)$ and $(Y, g)$, denoted by
$\delta\left((X,  f),(Y,  g)\right)$,  is
\begin{itemize}
\item[(i)] the number $\inf_{h}\max _{x\in X}\|
f(x)- g(h(x))\|$ where $h$ varies in the set
$H(X,Y)$ of all the homeomorphisms between $X$ and $Y$, if $X$ and
$Y$ are homeomorphic;
\item[(ii)] $+\infty$, if $X$ and $Y$ are
not homeomorphic.
\end{itemize}
\end{definition}

\subsection{Lower bounds for the natural pseudodistance}

The following theorem, which extends Theorem 1 of~\cite{DoFr04}, provides us with a lower
bound for the natural pseudodistance.

\begin{theorem}\label{bound}
Let $(X,  f)$ and $(Y,  g)$ be two pairs. If, for some degree $i$,
$\beta_{(X, f,i)}( u,  v) >$ $\beta_{(Y, g,i)}(u', v')$
then
\[\delta\left((X,  f), (Y,  g)\right) \ge \min\left\{ \min_r\{u'_r-u_r\}, \min_r\{v_r-v'_r\}\right\}\]
\end{theorem}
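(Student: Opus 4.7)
The plan is a contraposition argument that generalizes Theorem 1 of~\cite{DoFr04} to arbitrary $n$. Set $c=\min\{\min_r(u'_r-u_r),\ \min_r(v_r-v'_r)\}$. The statement is trivial when $X$ and $Y$ are not homeomorphic (then $\delta=+\infty$) or when $c\le 0$ (then $\delta\ge 0\ge c$). So I may assume both are nontrivial; in particular $c>0$ yields $u\prec u'$ and $v'\prec v$, and combined with $(u,v),(u',v')\in\Delta^+$ this gives $u\prec v$ and $u'\prec v'$.

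I would then show that any homeomorphism $h\colon X\to Y$ with $\max_{x\in X}\|f(x)-g(h(x))\|<c$ (with respect to the max norm on $\mathbb{R}^n$, as adopted in Section~\ref{stab}) forces $\beta_{(X,f,i)}(u,v)\le\beta_{(Y,g,i)}(u',v')$. Indeed, for each $x\in X\langle f\preceq u\rangle$ and each coordinate $r$, we have $g_r(h(x))<f_r(x)+c\le u_r+(u'_r-u_r)=u'_r$, whence $h(X\langle f\preceq u\rangle)\subseteq Y\langle g\preceq u'\rangle$. The symmetric estimate $f_r(h^{-1}(y))<g_r(y)+c\le v'_r+(v_r-v'_r)=v_r$ gives $h^{-1}(Y\langle g\preceq v'\rangle)\subseteq X\langle f\preceq v\rangle$.

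Assembling these maps with the sublevel-set inclusions yields the square
\[
\xymatrix@R=15pt@C=40pt{
X\langle f\preceq u\rangle \ar[r]^-{\subseteq} \ar[d]_{h|} & X\langle f\preceq v\rangle \\
Y\langle g\preceq u'\rangle \ar[r]^-{\subseteq} & Y\langle g\preceq v'\rangle \ar[u]_{h^{-1}|}
}
\]
whose commutativity reduces to the set-theoretic identity $h^{-1}\circ h=1_X$. Applying \v{C}ech homology in degree $i$ and denoting by $\iota^X_*,\iota^Y_*$ the maps induced by the horizontal inclusions, I get $\iota^X_*=(h^{-1}|)_*\circ\iota^Y_*\circ(h|)_*$, so the standard rank bound for a composition of linear maps gives
\[
\beta_{(X,f,i)}(u,v)=\mathrm{rank}\,\iota^X_*\le\mathrm{rank}\,\iota^Y_*=\beta_{(Y,g,i)}(u',v'),
\]
contradicting the hypothesis. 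Passing to the infimum over admissible $h$ then delivers the stated lower bound on $\delta\left((X,f),(Y,g)\right)$.

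The main obstacle is really only bookkeeping: the vertical arrows are not inclusions but restrictions of bijections, so the commutativity of the square must be verified through $h^{-1}\circ h=1_X$ rather than through naturality of sublevel-set inclusions; and one must track the strict inequalities $u\prec v$, $u'\prec v'$, $u\prec u'$, $v'\prec v$ to guarantee that every arrow in the square is well-defined. Once this is in place, functoriality of \v{C}ech homology (recalled in Section~\ref{stab}) makes the argument structurally identical to its one-dimensional ancestor.
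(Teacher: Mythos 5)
Your proof is correct and takes essentially the same approach as the paper. The paper pulls the $Y$-sublevel sets back into $X$ via $h^{-1}$ (setting $X_{u'}=h^{-1}(Y\langle g\preceq u'\rangle)$, $X_{v'}=h^{-1}(Y\langle g\preceq v'\rangle)$) so that the factorization $\iota_{u,v}=\iota_{v',v}\circ\iota_{u',v'}\circ\iota_{u,u'}$ is a chain of genuine inclusions inside $X$; this is a cosmetic reformulation of your square built from $h|$, the $Y$-inclusion, and $h^{-1}|$, and the concluding rank-of-a-composite estimate is the same in both.
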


\begin{proof}
Set $\xi = \min\left\{ \min_r\{u'_r-u_r\}, \min_r\{v_r-v'_r\}\right\}$. If $\xi \le 0$ then the thesis trivially holds;
so assume $\xi >0$.
Since $\beta_{(X, f,i)}( u,  v) > 0$ we have $X \langle  f \m  u \rangle \ne \emptyset$.
Assume that the thesis does not hold, so
$\delta\left((X,  f), (Y,  g)\right) < \xi$. Under this assumption
there exists a homeomorphism $h:X \to Y$ such that
\[\max _{x\in X}\| f(x)- g(h(x))\|< \xi.\]
Set $\Theta(h) = \max _{x\in X}\| f(x)- g(h(x))\|$.
So $\Theta(h) < \xi$, and, setting $\vec{\zeta}=(\xi, \dots, \xi)$ we have that $h$ maps the lower level subset
$X \langle  f \m  u \rangle$ into  $Y \langle  g \m  u + \zeta \rangle \subset Y \langle  g \m u' \rangle$.
Analogously, $h^{-1}$ maps $Y \langle  g \m  v' \rangle$ into $X \langle  f \m v' + \zeta \rangle \subset X \langle  f \m  v \rangle$.
Set $X_u = X \langle  f \m  u \rangle$, $X_{u'} = h^{-1}(Y \langle  g \m u' \rangle)$,
$X_{v'} = h^{-1}(Y \langle  g \m  v' \rangle)$, $X_v = X \langle  f \m  v \rangle$. Then
there are inclusion maps

\[\iota_{ u, u'}: X_u \hookrightarrow X_{u'}, \; \; \; \;
\iota_{u', v'}: X_{u'} \hookrightarrow X_{v'}, \; \; \; \; \iota_{v', v}: X_{v'} \hookrightarrow X_v\]

\noindent and
$\iota_{u, v}: X_u \hookrightarrow X_v$, with $\iota_{u, v} =
\iota_{v', v} \circ \iota_{u', v'} \circ \iota_{u, u'}$.

The pairs $\big( Y \langle  g \m v' \rangle, Y \langle  g \m u'\rangle \big)$ and $(X_{v'}, X_{u'})$ are homeomorphic,
so they can be interchanged in what follows.
Our claim is that for each degree $i$ the dimension of the image of the homomorphism induced in homology by $\iota_{u, v}$
is less than or equal to the one of the image of the homomorphism induced by $\iota_{u', v'}$.
First, note that all dimensions considered here are finite (Theorem 2.3 of~\cite{CeDiFeFrLa13}). Then, each inclusion map $\iota$ induces a homology homomorphism
$\iota_*$, and $(\iota_{u, v})_* = (\iota_{v', v})_* \circ (\iota_{u', v'})_* \circ (\iota_{u, u'})_*$. But then
dimIm$(\iota_{u, v})_* \le$ dimIm$(\iota_{u', v'})_*$, so $\beta_{(X, f,i)}( u,  v) \le$
dimIm$(\iota_{u', v'})_* = \beta_{(Y, g,i)}(u', v')$ against the hypothesis.
\end{proof}

\subsection{An example of comparison of sampled shapes}

When only finite, dense enough samples - or, equivalently, ball coverings - of two objects are available,
if there is a nonempty intersection of the complements of the blind strips, we can still assess the
natural pseudodistance between them.

This is the case of the following example: We obtain a lower bound for $\delta(X, Y)$, where $X$ is the circle
and $Y$ is the bean-shaped curve of Figure \ref{circlecomp} and Figure \ref{beancomp}.

\begin{figure}[htbp]
  \centering
    \begin{minipage}[ht]{0.5\linewidth}
     \centering
     \includegraphics[scale=0.2]{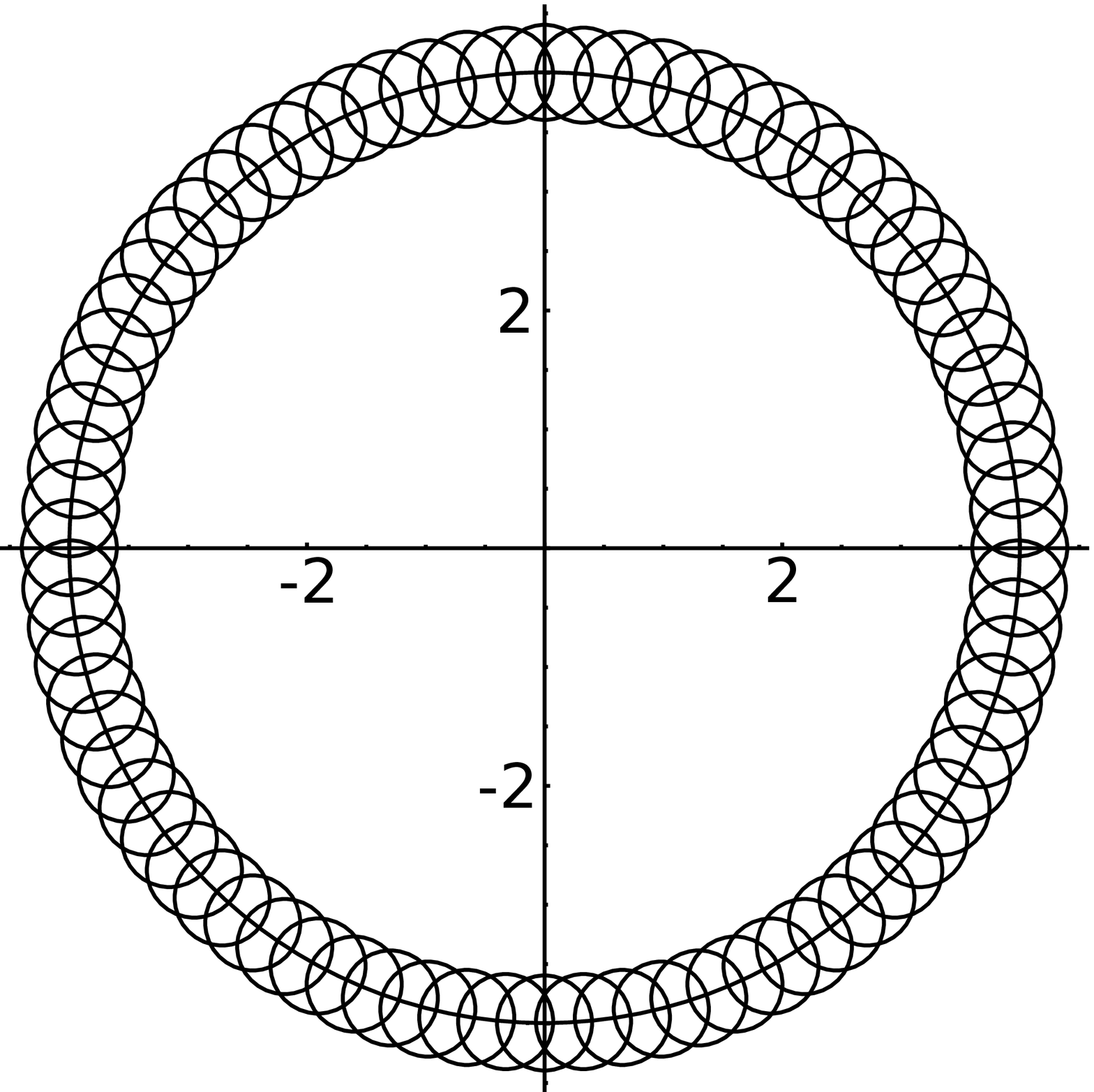}
     \caption{A circle $X$ covered with balls of radius 0.4.} \label{circlecomp}
  \end{minipage}%
  \hfill%
  \begin{minipage}[ht]{0.5\linewidth}
     \centering
     \includegraphics[scale=0.25]{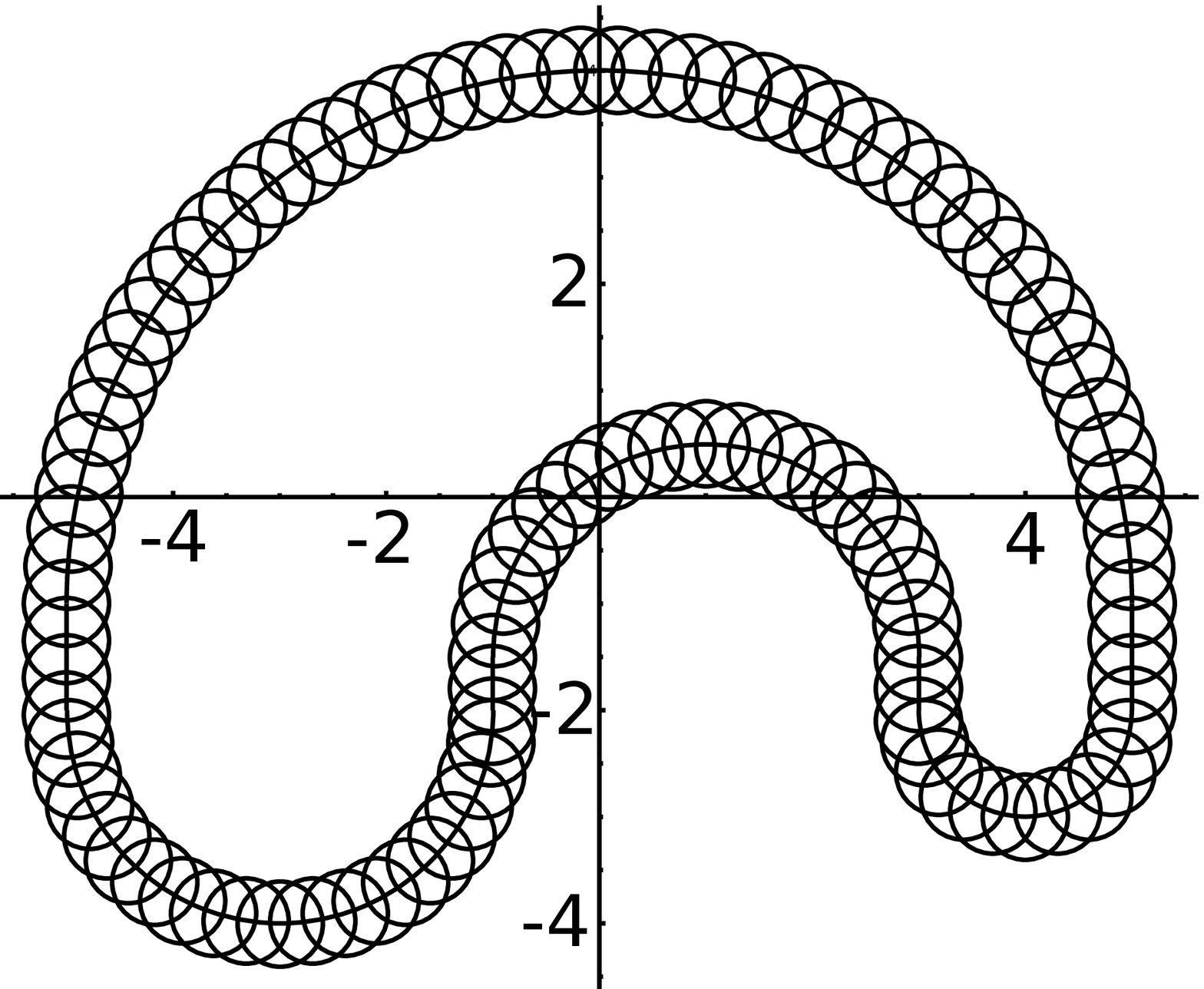}
     \caption{A different curve $Y$ covered with balls of radius 0.4.} \label{beancomp}
  \end{minipage}
\end{figure}

For both spaces the filtering functions $f$ and $g$ are the restrictions of the absolute value of ordinate. The 0-PBNs for $(X,f)$
and $(Y,g)$ are depicted in Figures \ref{circleh0} and \ref{beanh0} respectively.

\begin{figure}[htbp]
  \centering
    \begin{minipage}[ht]{0.5\linewidth}
     \centering
     \includegraphics[scale=0.2]{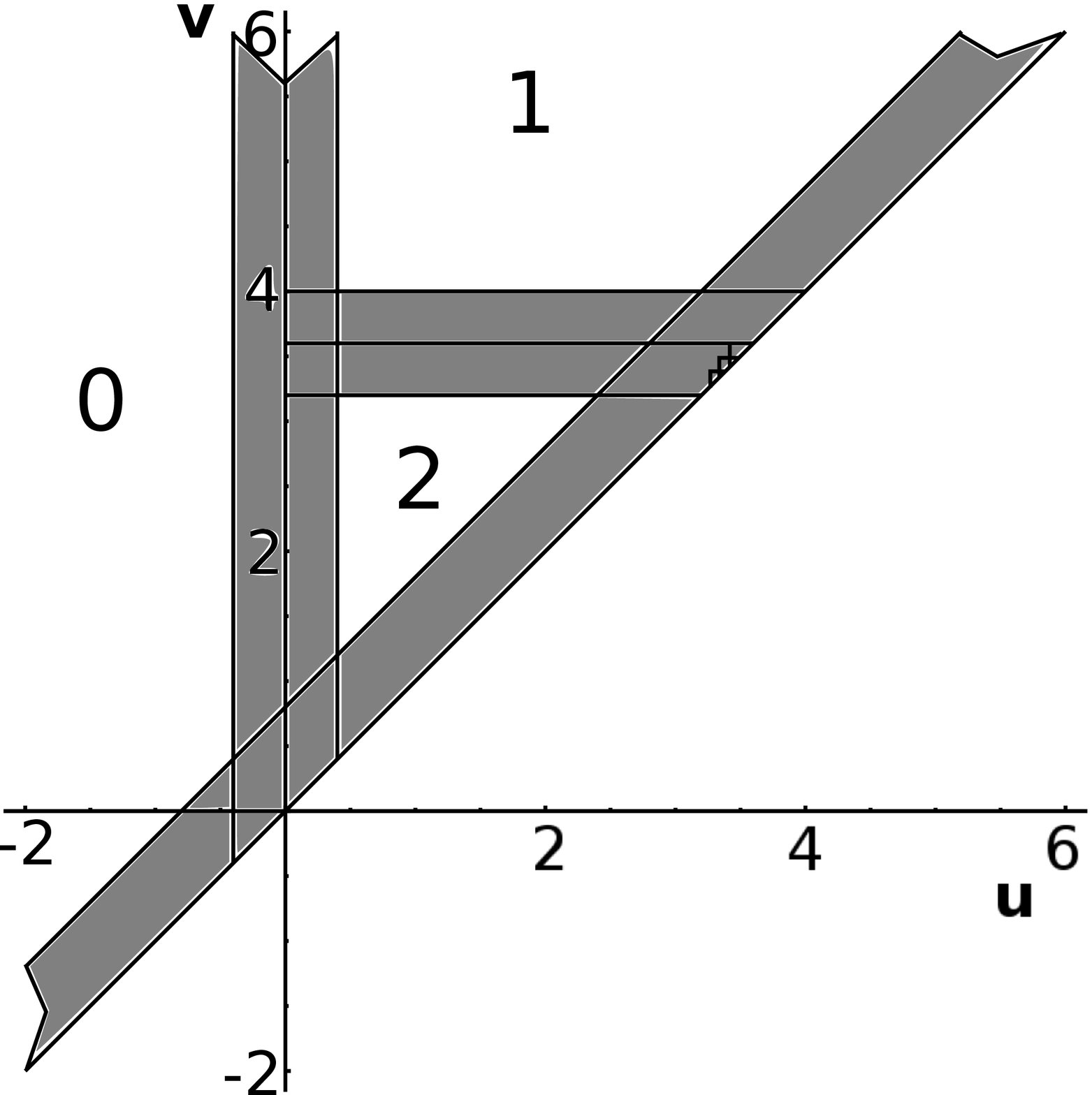}
     \caption{The 0-PBNs of the ball covering of $X$, with blind strips.} \label{circleh0}
  \end{minipage}%
  \hfill%
  \begin{minipage}[ht]{0.5\linewidth}
     \centering
     \includegraphics[scale=0.2]{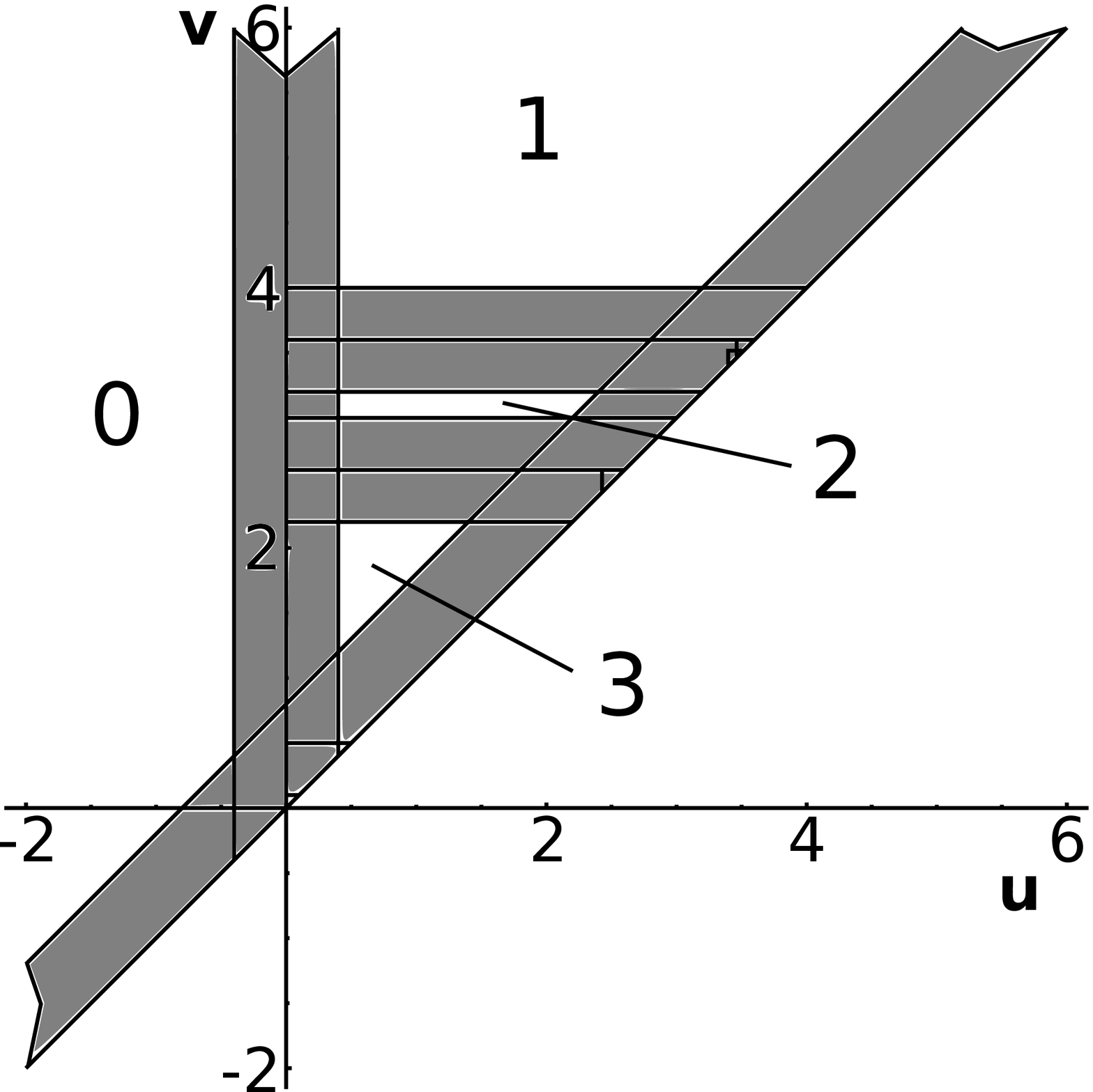}
     \caption{The 0-PBNs of the ball covering of $Y$, with blind strips.} \label{beanh0}
  \end{minipage}
\end{figure}

There is a triangle, not covered by blind strips in both diagrams, where the PBN for $X$ is 2 and
for $Y$ is 3 by Theorem \ref{wrap}. So we have
\[\beta_{(Y,g,0)}(0.4, 2.2) = 3 > 2 = \beta_{(X,f,0)}(1.1, 1.5)\]
and, by Theorem \ref{bound}, $\delta\left((X, f), (Y, g)\right) \ge 0.7$.

If we made use of smaller - but denser - balls of radius 0.2, then we would have
\[\beta_{(Y,g,0)}(0.2, 2.6) = 3 > 2 = \beta_{(X,f,0)}(1.3, 1.5)\]
and $\delta\left((X, f), (Y, g)\right) \ge 1.1$. (The true natural pseudodistance of the two pairs can be computed directly from the definition, and equals 1.5.)

\section{A combinatorial representation}

The ball unions of Section \ref{main}, although generated by finite sets,
are still continuous objects. It is desirable that the topological information
on $X$, up to a certain approximation, be condensed in a combinatorial object.
For size functions (i.e. for PBNs of degree 0) it was a graph; here, it has to be a simplicial complex.
We shall build such a complex, by following~\cite{Ed95}, to which we refer for
all definitions not reported here. Please note that~\cite{Ed95} uses {\it weighted}
Voronoi cells and diagrams, while we do not need to worry about that, since all
of our balls have the same radius; so the customary Euclidean distance can be used
instead of the power distance employed in that paper.

Let $X$, $L=\{l_1,\ldots,l_k\}$ and $\delta$ be as in Section \ref{app} (the case of Section \ref{point}
is an immediate extension). Moreover, let the points of $L$ be in general position.
For each $l_j\in L$, let $B_j=B(l_j, \delta)$
be the ball of radius $\delta$, centered at $l_j$. The set $B=\{B_1,\ldots,B_k\}$ is
a ball covering of $X$; denote by $U$ the corresponding ball union.
Let now $V_j$ be the {\it Voronoi cell} of $B_j$, i.e. the set of points
of $\mathbb{R}^m$ whose distance from $l_j$ is not greater than the distance from any other $l_{j'}$.

The set ${\cal V} = \{V_1,\ldots,V_k\}$ is the {\it Voronoi diagram} of $B$.
From $\cal V$ we get the collection of cells ${\cal Q}=\{V'_j = V_j \cap B_j \,\vert\, j=1,\ldots,k\}$,
a decomposition of $U$.

The {\it nerve} $N({\cal Q})$ of $\cal Q$ is the abstract simplicial complex where vertices
are the elements of $\cal Q$ and, for a subset $T$ of $\{1,\ldots,k\}$, the set of vertices
$\{V'_j \,\vert\, j\in T\}$ is a simplex if and only if $\bigcap_{j\in T} V'_j \neq \emptyset$.

For any $T \subseteq \{1,\ldots,k\}, T\neq \emptyset$ we denote by $\sigma_T$ the convex hull of $\{l_j \,\vert\, j \in T\}$

The {\it dual complex} of $\cal Q$ is ${\cal K} = \{\sigma_T \,\vert\, \{V'_j \,\vert\, j\in T\} \in N({\cal Q})\}$
and ${\cal S} = \vert {\cal K} \vert$, union of the simplices of $\cal K$, is the {\it dual shape} of $U$.

For a better understanding of the previous part we produce a toy example. Let
$X$ be a quarter of circle of radius 4  and $U$ be the union of
nine balls of radius 1, with centers near $X$ (Figure \ref{figc1}).
The Voronoi Diagram $\cal V$ associated to this ball covering $B$ is depicted in
Figure \ref{figc2}.

\begin{figure}[ht]
  \centering
  \begin{minipage}[ht]{0.49\linewidth}
     \centering
     \includegraphics[scale=0.2]{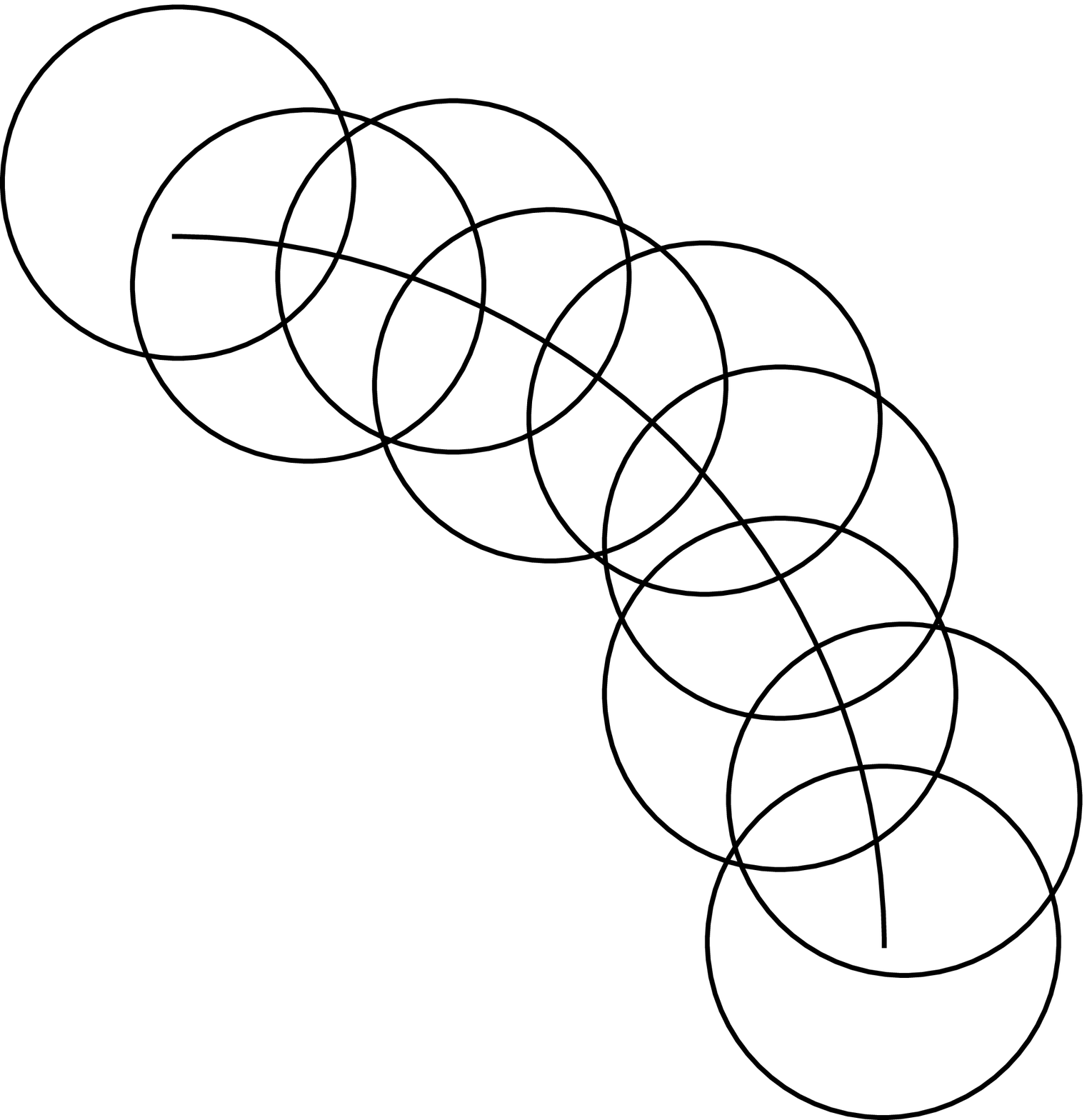}
     \caption{A quarter of circle of radius 4 covered by nine balls of radius 1.} \label{figc1}
  \end{minipage}%
  \hfill%
  \begin{minipage}[ht]{0.51\linewidth}
     \centering
     \includegraphics[scale=0.2]{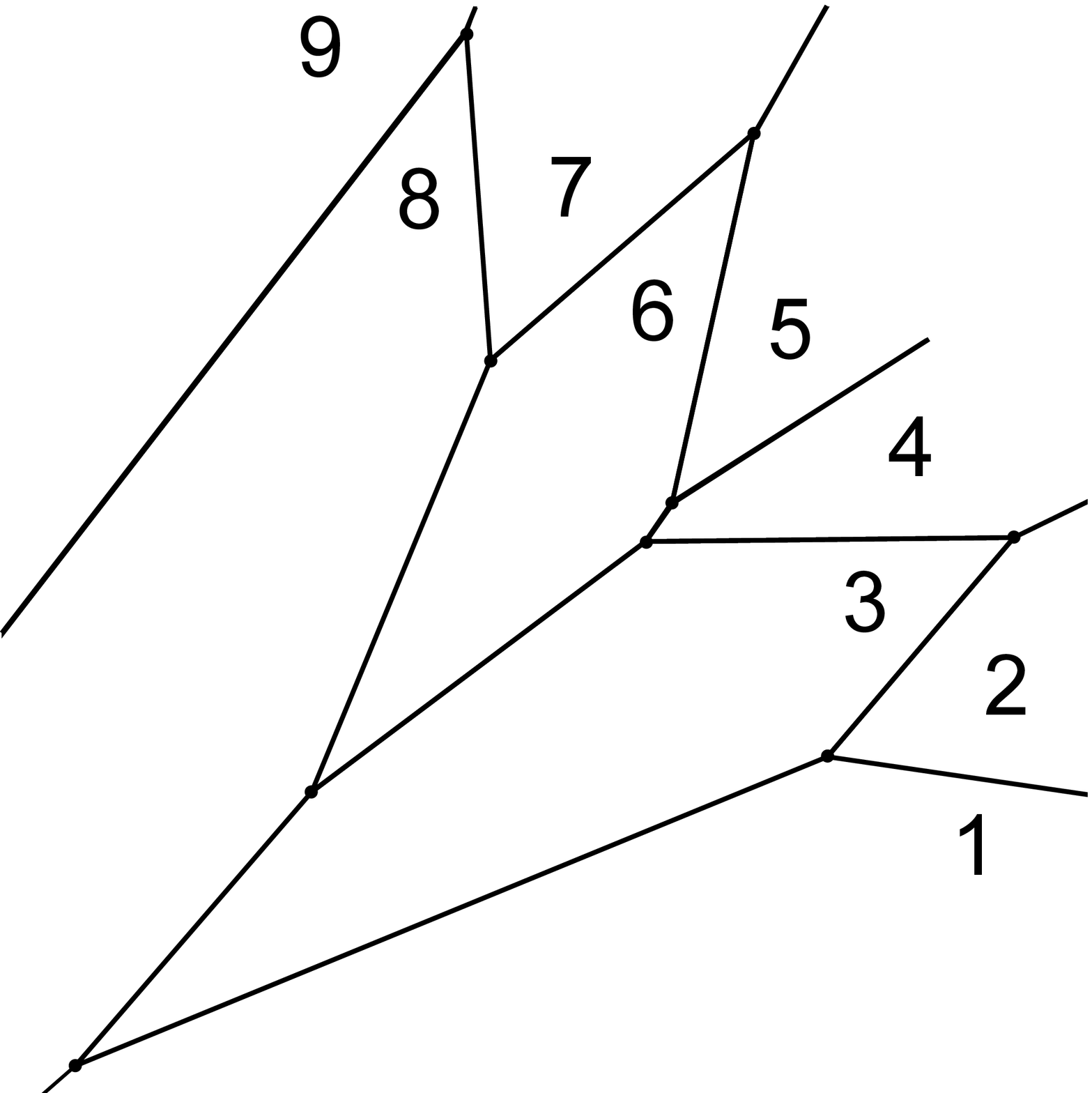}
     \caption{The Voronoi Diagram $\cal V$ of $B$.} \label{figc2}
  \end{minipage}
\end{figure}

Now the main idea is that we can associate the dual complex
$\cal K$ with the submanifold $X$. In fact, by Theorem 3.2 of~\cite{Ed95},
its space $\cal S$ is homotopically equivalent to $U$ and, by transitivity, to $X$.
Moreover, Section 3 of~\cite{Ed95} explicitly builds a
retraction $r$ from $U$ to $\cal S$ and a homotopy $H$ from the identity of $U$, to $p$,
such that $\forall y \in {\cal S}$, $\forall v \in
p^{-1}(y)$, $\forall t\in I$ we have $(p\circ H)(v,t)=y$.
For a complete description of the homotopy $H$ and the retraction
$p$ we refer to the original article.

$\cal K$ and $\cal S$ are shown in Figures
\ref{figc4},
\ref{figc5} respectively.

\begin{figure}[ht]
  \centering
\begin{minipage}[ht]{0.5\linewidth}
     \centering
     \includegraphics[scale=0.17]{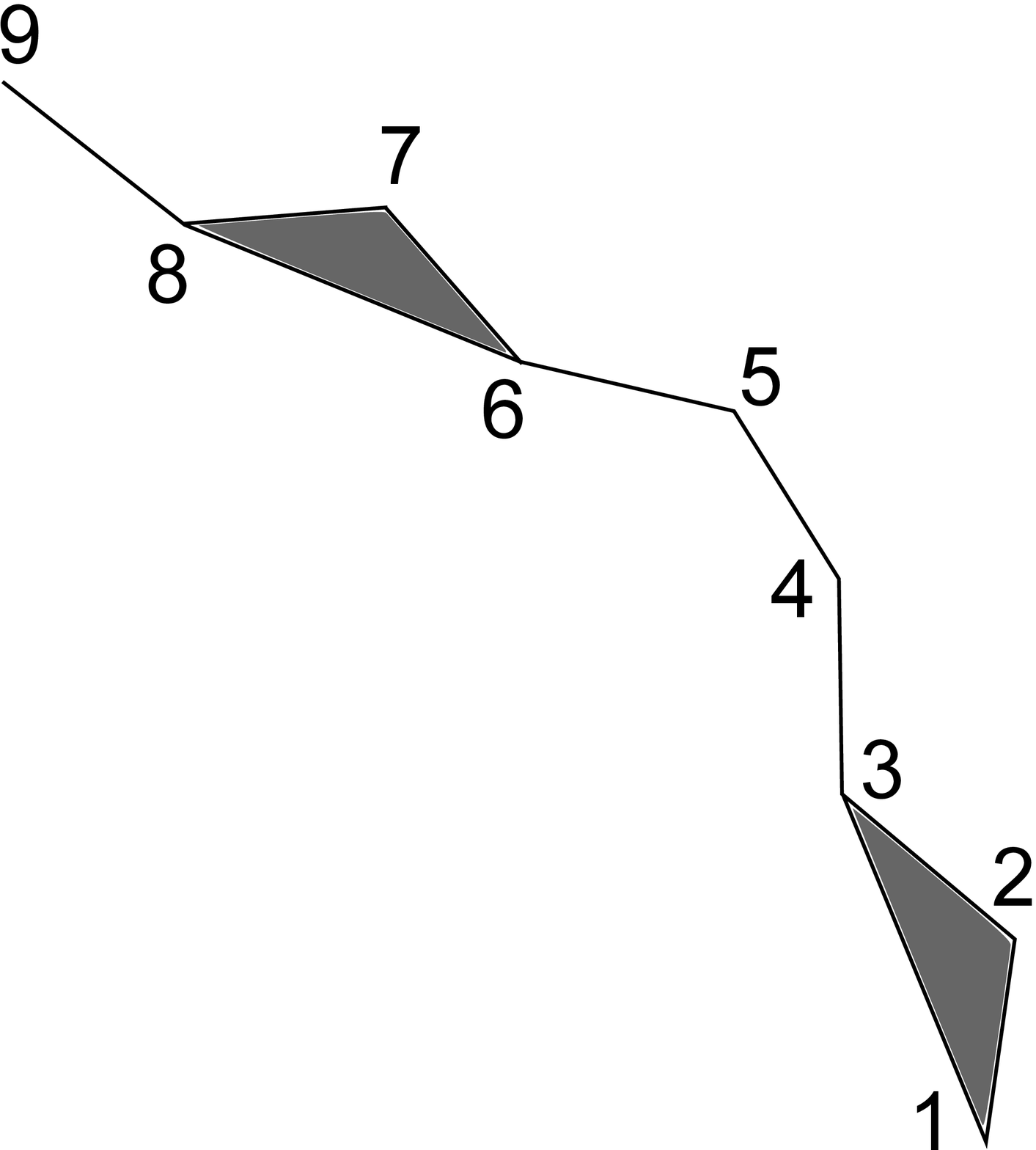}
     \caption{The dual complex $\cal K$.} \label{figc4}
  \end{minipage}%
  \hfill%
  \begin{minipage}[ht]{0.5\linewidth}
     \centering
     \includegraphics[scale=0.2]{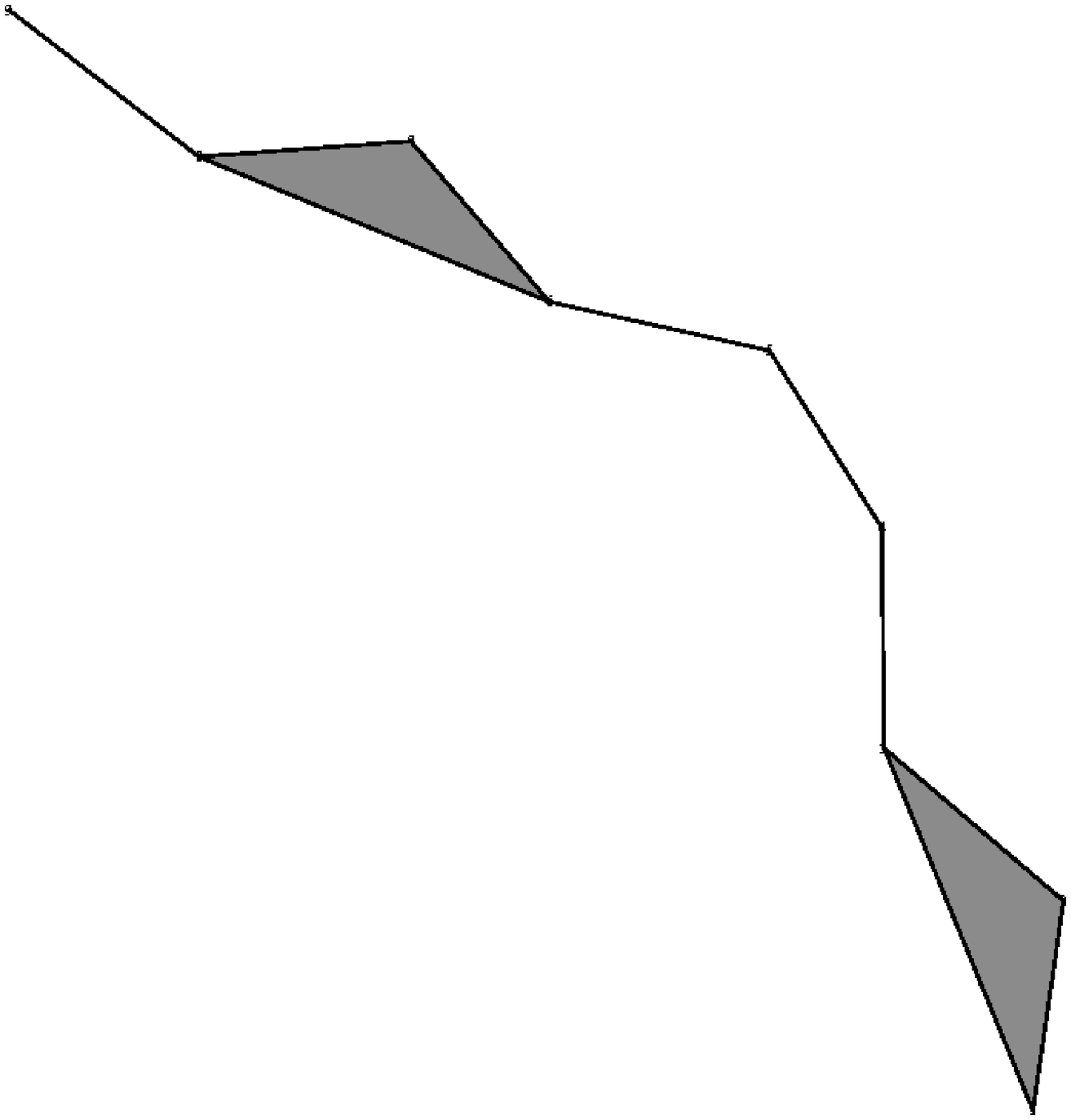}
     \caption{The dual shape $\cal S$.} \label{figc5}
  \end{minipage}
\end{figure}

\subsection{Ball union and dual shape}

Let $ f:\mathbb R^m \to \mathbb R^n$ be a
continuous function and let $ f_{\cal S}$ and $ f_U$ be the
restrictions of $ f$ to $\cal S$ and $U$ respectively. The notation for the PBN's $\beta$ is simplified as before.

\begin{lemma}\label{t4}
If $( u,  v)$ is
a point of $\Delta^+$ and if $ u + \omega(\delta) \prec  v
- \omega(\delta)$, where $
\omega(\delta)=(\Omega(\delta),\ldots,\Omega(\delta)) \in
\mathbb{R}^n$ , then
\[\beta_U( u- \omega(\delta),  v+ \omega(\delta))
\leq \beta_{\cal S}( u, v)\leq \beta_U( u+ \omega(\delta), v- \omega(\delta)).\]

If $\overline{u}, \overline{v} \in \mathbb{R}^n$ are such that
\[\beta_U( \overline{u}, \overline{v}) \neq \beta_{\cal S}( \overline{u}, \overline{v})\]
then there is at least a point $(\tilde{u}, \tilde{v})$ with max norm
$\| (\overline{u}, \overline{v}) - (\tilde{u}, \tilde{v}) \| \le \Omega(\delta)$, which is either an element of the boundary of $\Delta^+$ or a discontinuity
point of $\beta_U$.
\end{lemma}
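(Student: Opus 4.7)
The strategy is to apply the general results of Section \ref{Retracts}, specifically Lemmas \ref{lemmas3} and \ref{blind}, with the roles taken to be $V = U$ and $Y = {\cal S}$. Although the preamble of Section \ref{Retracts} is phrased for a compact Riemannian submanifold $Y$, an inspection of those proofs reveals that only the triangulability of $Y$ and $V$, together with the retraction--homotopy structure, is actually used. Both requirements are met here: $U$ is a compact, triangulable subspace of $\mathbb{R}^m$, and ${\cal S}$ is the underlying space of the finite simplicial complex ${\cal K}$, hence triangulable.

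First I would verify that the retraction $p : U \to {\cal S}$ and the homotopy $H : U \times I \to U$ supplied by Section 3 of~\cite{Ed95} satisfy the compatibility condition $(p \circ H)(v,t) = y$ for every $y \in {\cal S}$, $v \in p^{-1}(y)$, $t \in I$. This is precisely what is recorded in the paragraph introducing ${\cal S}$, so the hypotheses of Section \ref{Retracts} are in force.

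The core geometric step is to bound the maximum displacement $\eps = \max_{v \in U} \|p(v) - v\|$. In Edelsbrunner's construction the trajectory $t \mapsto H(v,t)$ is confined to a ball $B_j$ of the covering already containing $v$, so that $\|p(v)-v\| \le \delta$; this is the combinatorial counterpart of the observation in Remark \ref{rem}. Since the modulus of continuity $\Omega$ is non-decreasing, it follows that $\omega(\eps) \preceq \omega(\delta)$ componentwise. Using that $\beta_U$ is non-decreasing in the first variable and non-increasing in the second, Lemma \ref{lemmas3} then yields
\[
\beta_U(u - \omega(\delta), v + \omega(\delta))
\le \beta_U(u - \omega(\eps), v + \omega(\eps))
\le \beta_{\cal S}(u,v)
\le \beta_U(u + \omega(\eps), v - \omega(\eps))
\le \beta_U(u + \omega(\delta), v - \omega(\delta)),
\]
which is the first assertion. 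The blind--strip statement then follows immediately from Lemma \ref{blind} applied with the same $Y$ and $V$, combined with $\Omega(\eps) \le \Omega(\delta)$.

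The main obstacle I expect is the geometric claim that each trajectory of $H$ is trapped inside a single ball $B_j$ containing the initial point, so that $\|p(v)-v\|\le \delta$: this requires unwinding Edelsbrunner's explicit formula for the deformation, rather than being a formal consequence of the nerve theorem. Everything else is a transposition of the abstract machinery already developed for the smooth case in Section \ref{Retracts}.
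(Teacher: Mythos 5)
Your proof matches the paper's own (one-line) proof, which simply applies Lemma \ref{lemmas3} --- and, implicitly, Lemma \ref{blind} for the blind-strip part --- with $Y = {\cal S}$ and $V = U$. You correctly flag two details that the paper passes over silently: that the Retracts section only uses triangulability of $Y$ rather than the Riemannian-submanifold hypothesis stated in its preamble, and that one must verify $\eps = \max_{v\in U}\|p(v)-v\|\le\delta$ (so that $\Omega(\eps)\le\Omega(\delta)$) by inspecting Edelsbrunner's explicit retraction, since the nerve theorem alone gives no displacement bound.
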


\begin{proof}
By Lemma \ref{lemmas3}, with $Y={\cal S}$, $V=U$.
\end{proof}

Now we can get an estimate of the PBNs of $X$ from the ones of $\mathcal{S}$.
The blind strips of the 1D reduction will be doubly wide, with respect to the ones previously considered.
Still, this can leave some regions of $\Delta^+$ where the computation is exact. Also here the position of the blind strips
is well determined by the position of the discontinuity lines of the PBNs of $\mathcal{S}$.

\begin{theorem}\label{t5}
If $( u,  v)$ is
a point of $\Delta^+$ and if $ u + 2 \omega(\delta) \prec  v
-2 \omega(\delta)$, where $
\omega(\delta)=(\Omega(\delta),\ldots,\Omega(\delta)) \in
\mathbb{R}^n$ , then
\[\beta_{\cal S}( u-2 \omega(\delta),  v+2 \omega(\delta))
\leq \beta_X( u, v)\leq \beta_{\cal S}( u+2 \omega(\delta), v-2 \omega(\delta)).\]

If $\overline{u}, \overline{v} \in \mathbb{R}^n$ are such that
\[\beta_X( \overline{u}, \overline{v}) \neq \beta_{\cal S}( \overline{u}, \overline{v})\]
then there is at least a point $(\tilde{u}, \tilde{v})$ with max norm
$\| (\overline{u}, \overline{v}) - (\tilde{u}, \tilde{v}) \| \le 2\Omega(\delta)$, which is either an element of the boundary of $\Delta^+$ or a discontinuity
point of $\beta_{\cal S}$.
\end{theorem}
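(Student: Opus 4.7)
The plan is to sandwich $\beta_X(u,v)$ between two values of $\beta_{\cal S}$ by chaining Theorem~\ref{wrap} (which relates $\beta_X$ and $\beta_U$ with shift $\omega(\delta)$) with Lemma~\ref{t4} (which relates $\beta_U$ and $\beta_{\cal S}$ with the same shift); the two single-shift errors compose to the $2\omega(\delta)$-shift stated here, and the blind-strip part then follows by adapting the contrapositive argument of Lemma~\ref{blind} to the doubled width.

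For the lower bound, first apply Lemma~\ref{t4} at $(u-2\omega(\delta),v+2\omega(\delta))$ --- whose hypothesis $u-\omega(\delta)\prec v+\omega(\delta)$ is automatic from $u\prec v$ --- to obtain $\beta_{\cal S}(u-2\omega(\delta),v+2\omega(\delta))\le\beta_U(u-\omega(\delta),v+\omega(\delta))$, and then Theorem~\ref{wrap} at $(u,v)$, whose hypothesis $u+\omega(\delta)\prec v-\omega(\delta)$ follows from ours, to conclude $\beta_U(u-\omega(\delta),v+\omega(\delta))\le\beta_X(u,v)$. For the upper bound the chain runs the other way: Theorem~\ref{wrap} gives $\beta_X(u,v)\le\beta_U(u+\omega(\delta),v-\omega(\delta))$, and Lemma~\ref{t4} applied at $(u+2\omega(\delta),v-2\omega(\delta))$ yields $\beta_U(u+\omega(\delta),v-\omega(\delta))\le\beta_{\cal S}(u+2\omega(\delta),v-2\omega(\delta))$.

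For the blind-strip claim I argue by contrapositive. Suppose $(\bar u,\bar v)$ lies at max-norm distance greater than $2\Omega(\delta)$ from both $\partial\Delta^+$ and the discontinuity set of $\beta_{\cal S}$. Then the closed box with corners $(\bar u-2\omega(\delta),\bar v-2\omega(\delta))$ and $(\bar u+2\omega(\delta),\bar v+2\omega(\delta))$ sits inside an open subset of $\Delta^+$ on which $\beta_{\cal S}$ is continuous; by discreteness of the integer range $\beta_{\cal S}$ is constant on this box, and combined with monotonicity this forces $\beta_{\cal S}(\bar u-2\omega(\delta),\bar v+2\omega(\delta))=\beta_{\cal S}(\bar u+2\omega(\delta),\bar v-2\omega(\delta))=\beta_{\cal S}(\bar u,\bar v)$. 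The sandwich of the first part then pins $\beta_X(\bar u,\bar v)$ to $\beta_{\cal S}(\bar u,\bar v)$.

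The main obstacle is a hypothesis check in the upper-bound step: as stated, Lemma~\ref{t4} at $(u+2\omega(\delta),v-2\omega(\delta))$ nominally requires $u+3\omega(\delta)\prec v-3\omega(\delta)$, whereas Theorem~\ref{t5} only grants us $u+2\omega(\delta)\prec v-2\omega(\delta)$. Revisiting the proof of Lemma~\ref{lemmas3} resolves this: the left-hand inequality of its sandwich only needs the homotopy $G$ restricted to $V_{u-\omega}\times I$ to land in $V_{v+\omega}$, which it does directly --- without ever invoking the middle sublevel sets $V_{u+\omega}$ and $V_{v-\omega}$. Hence the left inequality of Lemma~\ref{t4} requires only that the base point belong to $\Delta^+$, and this is precisely what the hypothesis of Theorem~\ref{t5} guarantees.
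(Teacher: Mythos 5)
Your proof is correct and follows the same route as the paper: chain Theorem~\ref{wrap} with Lemma~\ref{t4} to get the sandwich, then argue the blind-strip claim by the contrapositive scheme of Lemma~\ref{blind} with the width doubled. You also rightly flag and repair a hypothesis gap that the paper's proof skips over silently --- applying Lemma~\ref{t4} at $(u+2\omega(\delta),v-2\omega(\delta))$ as stated would require $u+3\omega(\delta)\prec v-3\omega(\delta)$, but the needed left inequality of Lemma~\ref{lemmas3} only requires the base point to lie in $\Delta^+$, which Theorem~\ref{t5}'s hypothesis $u+2\omega(\delta)\prec v-2\omega(\delta)$ does guarantee.
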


\begin{proof} By Theorem \ref{wrap}, and with $\omega=\omega(\delta)$,
\[\beta_U( u- \omega,  v+ \omega)\leq \beta_X( u, v)\leq \beta_U( u+ \omega, v- \omega)\]
Then we have
\[\beta_U( u+ \omega, v- \omega)
\leq \beta_{\cal S}( u+2 \omega, v-2 \omega)\]
\noindent by Lemma \ref{t4} by substituting $( u, v)$ with
$( u +2\omega, v-2\omega)$, and
\[\beta_{\cal S}( u-2 \omega,  v+2 \omega)
\leq \beta_U( u- \omega, v+ \omega)\]
\noindent by Lemma \ref{t4} by substituting $( u, v)$ with
$( u -2\omega, v+2\omega).$

The second part can be proved in an analogous way from the second part of Lemma \ref{t4}.
\end{proof}

\subsection{An example in 2D persistence}

As a simple example, we now apply Theorems \ref{bound} and \ref{t5} to two pairs $(X, f)$ and $(Y, g)$ with
2-dimensional filtering functions. Let $X$ and $Y$ be two circles of radius $r$ embedded in $\R^2$ and $f, g$
be (unknown) continuous functions from $\R^2$ to $\R^2$, whose restrictions to suitable neighbourhoods of $X, Y$
have modulus of continuity $\Omega(\delta) = \frac{\delta}{2 r}$. Here the range $\R^2$ parametrizes the plane
$\Pi$ of equation $R+G+B = \frac{4-\sqrt{2}}{2}$ (chosen as one which contains a fairly large square of points with nonnegative
coordinates) of the color space $RGB$. A Cartesian reference frame $x, y, z$ has been fixed in the $RGB$ space, so
that $\Pi$ is the $z=0$ plane. The change of reference is:

$$ \left(
\begin{matrix} R\cr G\cr B\end{matrix}\right) =
\left(\begin{matrix} -{1}\over{\sqrt{2}}&-{1}\over{\sqrt{6}}&{1}\over{\sqrt{3}}\cr
{1}\over{\sqrt{2}}&-{1}\over{\sqrt{6}}&{1}\over{\sqrt{3}}\cr
0&\sqrt{{2}\over{3}}&{1}\over{\sqrt{3}}\end{matrix}\right)\cdot
\left(\begin{matrix} x\cr y\cr z\end{matrix}\right) +
\left(\begin{matrix} {4-\sqrt{2}}\over{4}\cr {4-\sqrt{2}}\over{4}\cr 0\end{matrix}\right)$$

Of the functions on two circles we know a sampling given by 80 regularly spaced points, represented in
Figure \ref{color}.

\begin{figure}[htbp]
  \centering
  \begin{minipage}[ht]{0.9 \linewidth}
     \centering
     \includegraphics[scale=0.3]{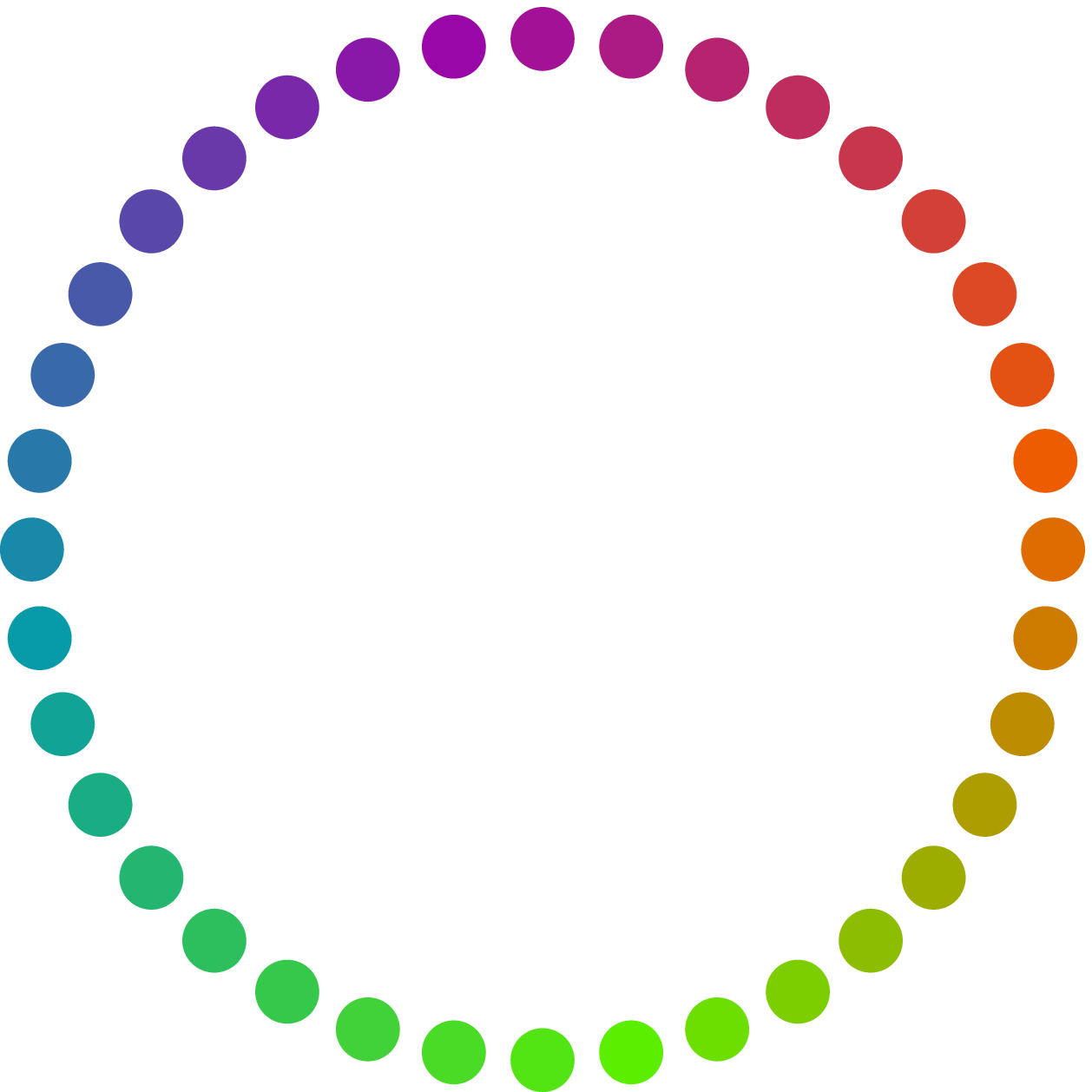}
     \includegraphics[scale=0.3]{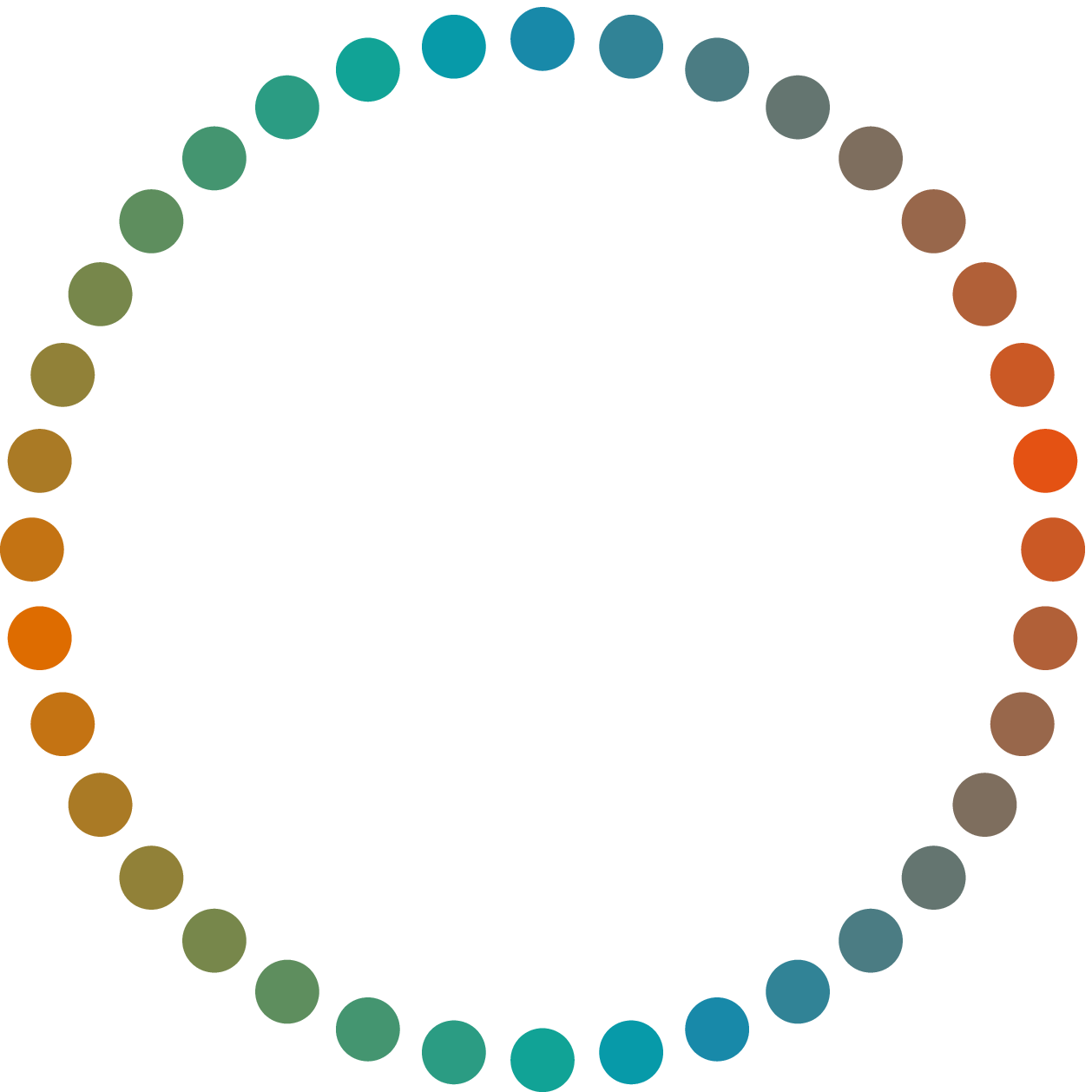}
     \caption{The color samplings of $(X, f)$ (left) and $(Y, g)$ (right).} \label{color}
  \end{minipage}%
 \end{figure}

We can think of the 80 points on each circle as centers of disks of radius $\delta = 0.08 r$. The distance between
two consecutive points is $2 r \pi \sin(\frac{\pi}{80}) \in ]0,078 r, 0.079 r[$, so the balls form a covering
which respects the hypotheses of Proposition \ref{smale} since $\tau = r$ and $0.08 < \sqrt{\frac{3}{5}}$. The corresponding
dual complex is a 1D cycle and the dual shape is a closed polygonal.

The point images in the color plane $\Pi$ are along the edges of the square of vertices $(-0.4, 0),
(-0.4, 0.8), (0.4, 0.8), (0.4, 0)$ for $(X, f)$ and along the polygonal of vertices $(-0.4, 0.04), (0.36, 0.8),
(0.4, 0.8), (0.4, 0.76), (-0.36, 0)$ (twice) for $(Y, g)$, two consecutive ones at distance $0.04 = \Omega(\delta)$
in the max norm.

A 1D reduction of the PBNs is possible through a foliation of the domain $\R^2 \times \R^2$.
In the terminology of~\cite{BiCeFrGiLa,CaDiFe}, a suitable admissible pair $\vec{l}, \vec{b}$ is given by
$\vec{l} = (\frac{1}{\sqrt{2}}, \frac{1}{\sqrt{2}})$ and $\vec{b} = (-0.2, 0.2)$; in the corresponding leaf
of the foliation, the PBN's of the dual shapes and the blind strips according to Theorem \ref{t5}
are as shown in Figure \ref{colPBN}.

\begin{figure}[htbp]
  \centering
  \begin{minipage}[ht]{1.0 \linewidth}
     \centering
     \includegraphics[scale=0.3]{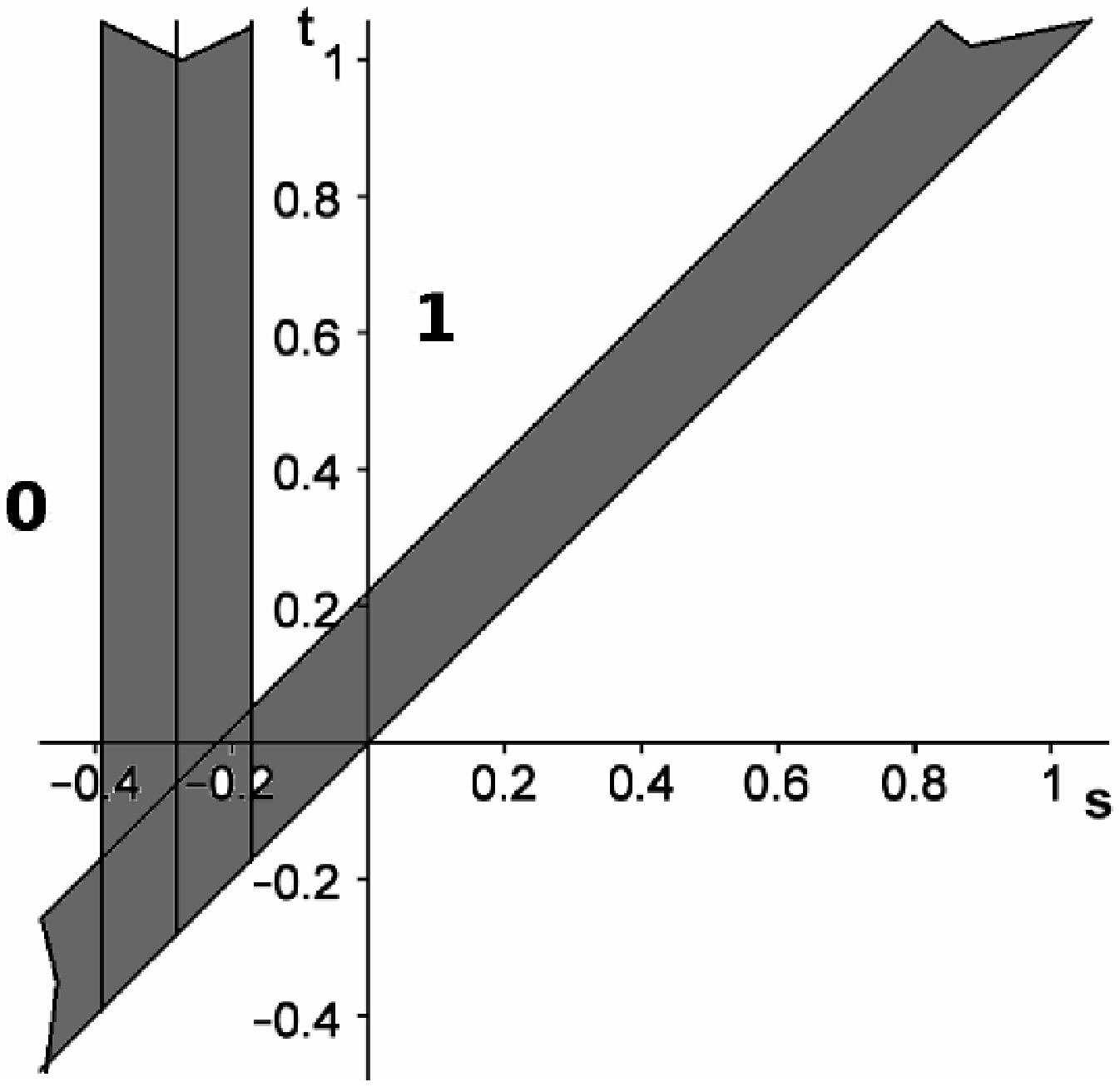}
     \includegraphics[scale=0.3]{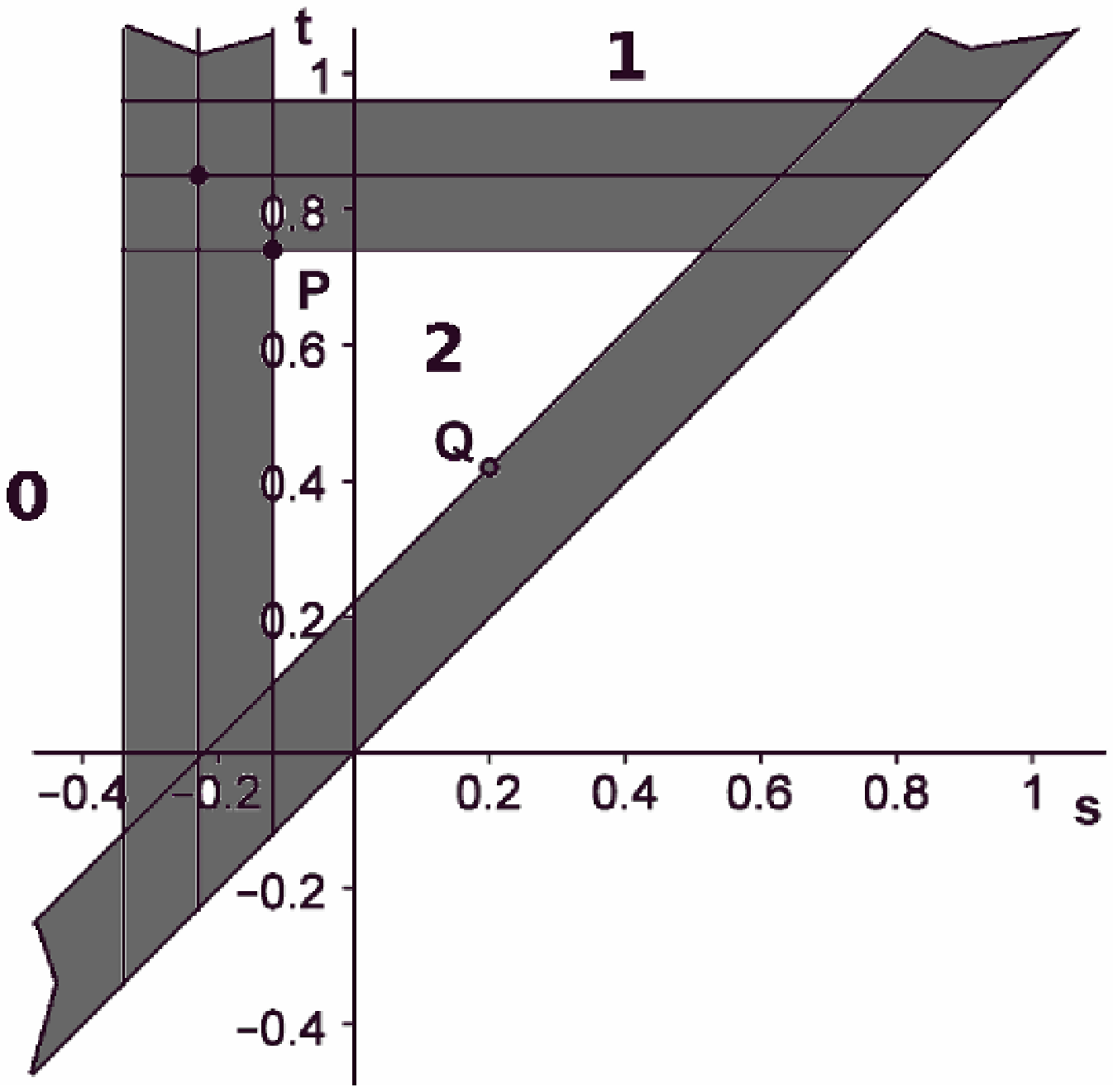}
     \caption{PBNs of the two colored circles} \label{colPBN}
  \end{minipage}%
 \end{figure}

In correspondence of points $P, Q$ in the leaf, we get $(u, v), (u', v') \in \R^2\times\R^2$ respectively,
with $u=(-0.28, 0.12)$, $v=(0.32, 0.72)$, $u'=(-0.06, 0.34)$, $v'=(0.1, 0.5)$. Then we have
\[\beta_{(Y,g,0)}(u, v) = 2 > 1 = \beta_{(X,f,0)}(u', v')\]
and, by Theorem \ref{bound}, $\delta\left((X, f), (Y, g)\right) \ge 0.22$.

\subsection*{Acknowledgements}
The authors wish to thank P. Frosini for the many
helpful suggestions. This work was performed under the auspices of INdAM-GNSAGA and
ARCES.

\bibliographystyle{abbrv}
\bibliography{bibliostrips}

\end{document}